\documentclass[reqno, 12pt]{amsart}
\usepackage{enumerate}
\usepackage{amssymb,url,color}
\usepackage{hyperref}
\usepackage{amsmath,amsthm}
\usepackage{amsfonts}
\usepackage{mathrsfs}
\usepackage{stmaryrd}
\usepackage{graphicx}
\usepackage{bbm}

\allowdisplaybreaks[4]

\newtheorem{thm}{Theorem}[section]
\newtheorem{cor}{Corollary}[section]
\newtheorem{prop}{Proposition}[section]
\newtheorem{lem}{Lemma}[section]
\newtheorem{rem}{Remark}[section]
\newtheorem{exa}{Example}[section]
\theoremstyle{Problem}

\theoremstyle{definition}
\newtheorem{defn}{Definition}[section]
\numberwithin{equation}{section}
\newcommand{\pp}{\mathbb{P}}
\newcommand{\ee}{\mathbb{E}}

\newcommand{\rr}{\mathbb{R}}
\newcommand{\Var}{\mathrm{Var}}
\def\beq{\begin{equation}}
\def\deq{\end{equation}}
\allowdisplaybreaks 

\bfseries\rmfamily 

\begin{document}

\title [Berry-Esseen bounds for entropy and diversity indices]
{Berry-Esseen bounds for estimators of entropy and diversity indices on countable alphabets}
\thanks{This work is supported by Natural Science Foundation of Henan (No. 262300421307).}

\author[Z. H. Yu]{Zhenhong Yu}
\address[Z. H. Yu]{School of Mathematics and Statistics, Henan Normal University, Henan Province, 453007, China.} \email{\href{mailto: Z. H. Yu
<zhenhongyu2022@126.com>}{zhenhongyu2022@126.com}}

\author[Y. Miao]{Yu Miao}
\address[Y. Miao]{School of Mathematics and Statistics, Henan Normal University, Henan Province, 453007, China.} \email{\href{mailto: Y. Miao
<yumiao728@gmail.com>}{yumiao728@gmail.com}; \href{mailto: Y. Miao <yumiao728@126.com>}{yumiao728@126.com}}

\begin{abstract}
In the present paper, we derive Berry-Esseen bounds for the estimation of diversity indices on countable alphabets.  A general non-asymptotic convergence rate is established for the plug-in estimator of a wide class of indices, including Simpson's index and Re\'{n}yi's entropy. For the practically crucial case of Shannon entropy, we provide explicit Berry-Esseen bounds for the standard plug-in estimator, as well as for two widely used bias-corrected variants, the Miller-Madow and the jackknife estimators.
\end{abstract}

\keywords{Diversity indices, Entropy, Plug-in estimator, Berry-Esseen bound.}
\subjclass[2020]{94A17, 62G05, 62G20, 60F05}
\maketitle

\section{Introduction}
Diversity indices provide the fundamental framework for quantifying the uncertainty or information content of a probability distribution over an alphabet. In this setting, the absence of natural ordering renders classical moments like variance and standard deviation undefined.
Beyond information theory, these indices are widely applied in ecology, genetics, and linguistics, where they quantify the complexity of populations such as species assemblages, cancer cell types, and author vocabularies. The earliest contributions to this field include Shannon's entropy \cite{Shannon} and Simpson's index \cite{Simpson}, both of which remain widely used. A unifying framework for these indices was established by R\'{e}nyi \cite{Renyi}, who introduced a parametric family of entropies that includes both Shannon's entropy and Simpson's index as special cases. This framework was further advanced by Hill \cite{Hill}, who showed that a broad class of diversity indices are equivalent under monotonic transformations. Zhang and Grabchak \cite{Z-G} showed that a large class of diversity indices in the literature can be represented as linear combinations of an entropic basis and proposed corresponding nonparametric estimators.

The most prevalent approach for evaluating a diversity index is the plug-in estimator. While extensive research has addressed its asymptotic properties for finite alphabets \cite{Z-G}, the case of countably infinite alphabets presents additional challenges. In previous papers, Grabchak and Zhang \cite{G-Z} studied the asymptotic distribution of the plug-in estimator for a large class of diversity indices. Additionally, Paninski \cite{P} and Zhang and Zhang \cite{ZhZh12} proved the asymptotic normality of the plug-in estimator of Shannon's entropy, and Zhang and Shi \cite{Z-S} extended this analysis to generalized Shannon entropy. To further reduce the sample bias, a number of bias-corrected modifications have been proposed in Pinchas et al. \cite{A}.
Among these, two of the most popular are the Miller-Madow estimator \cite{M}, which applies an additive correction based on the number of observed symbols, and the jackknife estimator \cite{Z-J}, a resampling-based technique that systematically reduces bias by recomputing the estimate on subsamples of the data. Chen et al. \cite{C-M} provided sufficient conditions for the asymptotic normality of these two bias-corrected estimators. However, these studies primarily establish asymptotic distributions without providing explicit rates of convergence.

In the present paper, we establish Berry-Esseen bounds for a large class of diversity indices and their bias-corrected estimators, thereby offering explicit non-asymptotic convergence rates. Section 2 states the main results. Our results hold for a wide family of diversity indices including Simpson's index and R\'{e}nyi's entropy. While Shannon's entropy \cite{Shannon} does not directly fall into our framework, we provide a separate proof for this specific case. The result is then extended to the Miller-Madow and jackknife estimators of Shannon's entropy. In Section 3, we discuss some examples to show that our conditions are readily satisfied. The proofs of all results are detailed in Section 4.
\section{Diversity Indices}\label{1}
We first present a general theorem  establishing the non-asymptotic convergence rate of the plug-in estimator. Specifically, our analysis covers a triangular array setting, where the underlying distribution may depend on the sample size $n$, as well as the classical independent and identically distributed setting with a fixed distribution. To state our results, we need the following definition.
\begin{defn}\label{def1}
Fix $\beta\in(0,1]$. A function $g:[0,1] \to \rr$ is called $\beta$-H\"older continuous if there exists a constant $M > 0$ such that, for any $x,y \in [0,1]$, we have
$$
|g(x)-g(y)|\leq M|x-y|^\beta.
$$
\end{defn}
It is easy to see that every $\beta$-H\"older continuous function is continuous and bounded on a closed interval.

Let $\mathcal{A}=\{a_i, i\ge 1\}$ be a countably infinite alphabet with associated probability measures $\mathbf{P}_n=\{p_{n,i}, i\ge 1\}$ for each $n$. The letters of $\mathcal{A}$ correspond to species in an ecosystem, words in the English language, types of cancer cells in a tumour, or any other categorical quantity whose diversity is of interest to us. We allow some (even countably many) $p_{n,i}$s to be zero. Thus finite alphabets are a special case of this model.

For each $n$, diversity index is a function $\theta$ that maps $\mathbf{P}_n$ into $\rr$. A common assumption is that
\beq\label{theta}
\theta_n=\theta(\mathbf{P}_n)=\sum_{i=1}^\infty g\left(p_{n,i}\right),
\deq
where $g: [0,1]\to \rr$. Such indices were called dichotomous indices in Patil and Taillie \cite{P-T}. To ensure that the index is well defined, we assume that
\beq\label{theta1}
\sum_{i=1}^\infty \left|g\left(p_{n,i}\right)\right|<\infty,\ \ \ \text{for each}\ n.
\deq
For each $n\ge 1$, let $\{X_{k,n}, 1\le k\le n\}$ be an array of independent and identically distributed random variables taking values in some countably infinite alphabet $\mathcal{A}$ with common distribution $\mathbf{P}_n$, i.e.,
$$
p_{n,i}=\pp(X_{1,n}=a_i),  \ \ \  \ i\ge 1, \ n\ge 1.
$$
For each $i$, let
$$
\hat p_{n,i}:=\frac{1}{n}\sum_{k=1}^nI_{\{X_{k,n}=a_i\}}
$$
be the sample proportion. The plug-in estimator of $\theta_n$ is given by
\beq\label{theta2}
\hat\theta_n=\sum_{i=1}^\infty g(\hat p_{n,i}).
\deq
\begin{thm}\label{thmtheta}
Assume that the function $g:[0,1] \to \rr$ is differentiable and its derivative $g'$ is $\beta$-H\"{o}lder
continuous for some $\beta \in (0,1]$. Let
\beq\label{thmtheta-1}
\sigma^2_n=\sum_{i=1}^{\infty}p_{n,i}\left(g'\left( p_{n,i}\right)\right)^2-\left(\sum_{i=1}^{\infty}p_{n,i}g'\left( p_{n,i}\right)\right)^2
\deq
and
\beq\label{thmtheta-2}
\gamma=
\begin{cases}
\beta/2 \ \ &\text{if}\ \ \beta\in (0,0.5]\\
\beta-0.5 \ \ &\text{if}\ \ \beta\in (0.5,1]
\end{cases}.
\deq
If
\begin{enumerate}[$(1)$]
\item $\beta\in (0.5,1]$ or
\item $\beta\in (0,0.5]$ and
\beq\label{thm1-c}
\sup_{n}\sum_{i=1}^\infty p_{n,i}^{(\beta+1)/2}<\infty,
\deq
\end{enumerate}
then we have
\beq\label{thmtheta-3}
\sup_x\left|\pp\left(\frac{\sqrt{n}}{\sigma_n}(\hat \theta_n-\theta_n)\le x\right)-\Phi(x)\right|\le\frac{C}{n^{\gamma/2} \sigma_n^{1/2}},
\deq
where $\Phi$ denotes the standard normal distribution function.
\end{thm}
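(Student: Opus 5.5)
We linearize $\hat\theta_n-\theta_n$ around $\mathbf P_n$ and treat the remainder with a standard perturbation lemma for Berry--Esseen bounds. By the mean value theorem, for each $i$ there is $\xi_{n,i}$ between $p_{n,i}$ and $\hat p_{n,i}$ with
\[
g(\hat p_{n,i})-g(p_{n,i})=g'(p_{n,i})(\hat p_{n,i}-p_{n,i})+\bigl(g'(\xi_{n,i})-g'(p_{n,i})\bigr)(\hat p_{n,i}-p_{n,i}).
\]
Summing over $i$ gives $\hat\theta_n-\theta_n=L_n+R_n$. The linear part is
\[
L_n=\frac1n\sum_{k=1}^n Y_{k,n},\qquad Y_{k,n}=\sum_i g'(p_{n,i})\bigl(I_{\{X_{k,n}=a_i\}}-p_{n,i}\bigr).
\]
The remainder satisfies $|R_n|\le M\sum_i|\hat p_{n,i}-p_{n,i}|^{1+\beta}$ by H\"older continuity of $g'$. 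Interchanging sum and expectation needs only that $g'$ is bounded on $[0,1]$, which holds since it is H\"older continuous.

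The $Y_{k,n}$ are i.i.d., centred, and have variance exactly $\sigma_n^2$ as in \eqref{thmtheta-1}. Moreover $|Y_{k,n}|\le 2\sup|g'|=:2K$, so $\ee|Y_{k,n}|^3\le 2K\sigma_n^2$. The classical Berry--Esseen theorem then gives
\[
\sup_x\bigl|\pp(\sqrt n L_n/\sigma_n\le x)-\Phi(x)\bigr|\le \frac{C}{\sqrt n\,\sigma_n}.
\]
To absorb the remainder we use the elementary inequality
\[
\sup_x|\pp(U+V\le x)-\Phi(x)|\le \sup_x|\pp(U\le x)-\Phi(x)|+\frac{\varepsilon}{\sqrt{2\pi}}+\pp(|V|>\varepsilon),
\]
with $U=\sqrt n L_n/\sigma_n$ and $V=\sqrt n R_n/\sigma_n$. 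By Markov's inequality, $\pp(|V|>\varepsilon)\le \ee|V|/\varepsilon$, and the choice $\varepsilon=(\ee|V|)^{1/2}$ bounds the remainder contribution by $C(\ee|V|)^{1/2}$. This square root explains the exponents $\gamma/2$ and $\sigma_n^{1/2}$ in \eqref{thmtheta-3}. The target therefore reduces to
\[
\sqrt n\,\ee|R_n|\le C n^{-\gamma}.
\]
We may also assume $\sqrt n\sigma_n\ge1$, since otherwise \eqref{thmtheta-3} is trivial. Then $(\sqrt n\sigma_n)^{-1}\le(\sqrt n\sigma_n)^{-1/2}\le n^{-\gamma/2}\sigma_n^{-1/2}$, because $\gamma\le1/2$.

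The main step is bounding $\ee\sum_i|\hat p_{n,i}-p_{n,i}|^{1+\beta}$. Let $p=p_{n,i}$. Jensen's inequality gives the first bound
\[
\ee|\hat p-p|^{1+\beta}\le\bigl(p(1-p)/n\bigr)^{(1+\beta)/2}.
\]
For small $p$ we also use the second bound
\[
\ee|\hat p-p|^{1+\beta}\le \ee|\hat p-p|\le 2p,
\]
and more sharply $|\hat p-p|^{1+\beta}\le |\hat p-p|\cdot\max(\hat p,p)^\beta$, which is useful when $np\le1$.

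In case (2), summing the first bound and using \eqref{thm1-c} gives
\[
\sqrt n\,\ee|R_n|\le CM\,n^{1/2-(1+\beta)/2}\sum_i p_{n,i}^{(1+\beta)/2}=Cn^{-\beta/2},
\]
which is $\gamma=\beta/2$.

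In case (1), $\beta>1/2$ and no summability assumption is available, so we split the indices. For $p\ge 1/n$ we use the first bound together with $p^{(1+\beta)/2}\le p\,n^{(1-\beta)/2}$, since $p^{-(1-\beta)/2}\le n^{(1-\beta)/2}$. This part contributes at most $n^{-(1+\beta)/2}n^{(1-\beta)/2}=n^{-\beta}$. For $p<1/n$ we use a bound of the form
\[
\ee|\hat p-p|^{1+\beta}\le C p\,n^{-\beta},
\]
obtained from $\hat p\in\{0,1/n,\dots\}$ and $\ee\hat p^{1+\beta}\le C p\,n^{-\beta}$ when $np<1$. This also sums to $Cn^{-\beta}$. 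Hence $\sqrt n\,\ee|R_n|\le Cn^{1/2-\beta}=Cn^{-\gamma}$ with $\gamma=\beta-1/2$.

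The delicate points are this small-$p$ moment estimate and uniformity in $n$, which is the reason for the supremum in \eqref{thm1-c}. Combining the three error terms gives $C\bigl((\sqrt n\sigma_n)^{-1}+n^{-\gamma/2}\sigma_n^{-1/2}\bigr)$, which yields \eqref{thmtheta-3}.
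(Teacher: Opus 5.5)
Your proposal is correct and follows essentially the same route as the paper. Both arguments linearize with a H\"older remainder, apply a Berry--Esseen bound to the bounded i.i.d.\ linear part, and control the remainder through the perturbation inequality of Lemma~\ref{lem2} plus Markov's inequality, using the two moment bounds $\ee|\hat p_{n,i}-p_{n,i}|^{1+\beta}\le Cp_{n,i}n^{-\beta}$ and $\ee|\hat p_{n,i}-p_{n,i}|^{1+\beta}\le C(p_{n,i}/n)^{(1+\beta)/2}$ of Lemma~\ref{lem4}.

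The differences are cosmetic. You use the classical third-moment Berry--Esseen theorem, which gives $C/(\sqrt n\,\sigma_n)$ rather than the paper's $C/(n^{\gamma}\sigma_n)$ via Lemma~\ref{lem1}. You derive the first moment bound yourself by splitting at $p_{n,i}=1/n$ instead of citing it. Your reduction to $\sqrt n\,\sigma_n\ge 1$ also makes explicit how the linear-part error is absorbed into $C n^{-\gamma/2}\sigma_n^{-1/2}$, which the paper leaves implicit.
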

\begin{rem}
The bound directly implies the central limit theorem established in \cite{G-Z} with the same condition $$\lim_{n\to\infty} n^\gamma\sigma_n=\infty.$$
\end{rem}
Theorem \ref{thmtheta} allows the underlying distribution $\mathbf{P}_n$ to change with $n$. An important special case is the classical setting with fixed distribution.

Let $\{X_n, n\ge 1\}$ be a sequence of independent and identically distributed random variables taking values in alphabet $\mathcal{A}=\{a_i, i\ge 1\}$ with distribution
$\mathbf{P}=\{p_{i}, i\ge 1\}$ i.e.,
$$
p_{i}=\pp(X_{1}=a_i),  \ \ \  \ i\ge 1.
$$
For each $i$, let
\beq\label{hat}
\hat p_{i}:=\frac{1}{n}\sum_{k=1}^nI_{\{X_{k}=a_i\}}
\deq
be the sample proportion. The plug-in estimator of $\theta$ is given by
\beq\label{theta3}
\hat\theta_n=\sum_{i=1}^\infty g(\hat p_{i}).
\deq
\begin{cor}\label{cortheta}
Assume that the function $g$ is differentiable on $[0,1]$ and its derivative $g'$ is $\beta$-H\"{o}lder
continuous for some $\beta \in (0,1]$. Let
\beq\label{cortheta-1}
\sigma^2=\sum_{i=1}^{\infty}p_{i}\left(g'\left( p_{i}\right)\right)^2-\left(\sum_{i=1}^{\infty}p_{i}g'\left( p_{i}\right)\right)^2
\deq
and assume that
\beq\label{cortheta-2}
\gamma=
\begin{cases}
\beta/2 \ \ &\text{if}\ \ \beta\in (0,0.5]\\
\beta-0.5 \ \ &\text{if}\ \ \beta\in (0.5,1]
\end{cases}.
\deq
If
\begin{enumerate}[$(1)$]
\item $\beta\in (0.5,1]$ or
\item $\beta\in (0,0.5]$ and
$$
\sum_{i=1}^\infty p_{i}^{(\beta+1)/2}<\infty,
$$
\end{enumerate}
then we have
\beq\label{cortheta-3}
\sup_x\left|\pp\left(\frac{\sqrt{n}}{\sigma}(\hat\theta_n-\theta)\le x\right)-\Phi(x)\right|\le
\frac{C}{n^{\gamma/2}},
\deq
where $\Phi$ denotes the standard normal distribution function.
\end{cor}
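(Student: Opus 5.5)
The corollary is the special case of Theorem \ref{thmtheta} with a distribution that does not depend on $n$. For each $n$ we take the array $X_{k,n}=X_k$, $1\le k\le n$. Then $\mathbf{P}_n=\mathbf{P}$ for every $n$, so $p_{n,i}=p_i$. The sample proportions $\hat p_{n,i}$ coincide with $\hat p_i$ in \eqref{hat}, the estimator \eqref{theta2} coincides with \eqref{theta3}, and $\theta_n=\theta$. The quantity $\sigma_n^2$ in \eqref{thmtheta-1} equals $\sigma^2$ in \eqref{cortheta-1} for all $n$.

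The hypotheses of Theorem \ref{thmtheta} then need to be checked. The smoothness assumption on $g$ is the same in both statements. In case (1) nothing further is needed. In case (2), condition \eqref{thm1-c} reads $\sup_n\sum_i p_i^{(\beta+1)/2}=\sum_i p_i^{(\beta+1)/2}<\infty$, which is exactly the assumption of the corollary.

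The summability \eqref{theta1} is implicitly required for $\theta$ to be well defined, and I would note that it follows automatically. Since $g'$ is bounded on $[0,1]$, we have $|g(p_i)-g(0)|\le \|g'\|_\infty p_i$. So $\sum_i |g(p_i)|<\infty$ whenever $g(0)=0$, or the alphabet is effectively finite. I would simply take this as part of the standing assumption that $\theta$ is finite, as the paper does.

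Applying \eqref{thmtheta-3} then gives the bound $C n^{-\gamma/2}\sigma^{-1/2}$. Because $\sigma$ is a fixed positive constant, the factor $\sigma^{-1/2}$ can be absorbed into $C$, which yields \eqref{cortheta-3}.

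The only point needing care is that $\sigma>0$. If $\sigma=0$, the normalization $\sqrt n(\hat\theta_n-\theta)/\sigma$ is meaningless, so positivity of $\sigma$ is implicit in the statement. I would state this explicitly, observing that $\sigma^2$ is the variance of $g'(p_{J})$ where $J$ has law $\mathbf{P}$, and assume it is nonzero. No other obstacle arises.
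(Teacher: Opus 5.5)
Your proposal is correct and follows the paper's proof: you specialize Theorem~\ref{thmtheta} to $\mathbf{P}_n\equiv\mathbf{P}$, note that \eqref{thm1-c} reduces to the corollary's summability hypothesis, and absorb the fixed factor $\sigma^{-1/2}$ into $C$. Making $\sigma>0$ an explicit assumption is actually more accurate than the paper's justification that $\sigma^2>0$ for every nonuniform distribution, which fails in general (for example, $g(x)=x$ gives $\sigma=0$ for every $\mathbf{P}$).
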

\begin{rem}
Corollary \ref{cortheta} follows directly from Theorem \ref{thmtheta} by taking $\mathbf{P}_n \equiv \mathbf{P}$. It presents the bound in its classical form, where the distribution does not depend on $n$.
\end{rem}
\begin{rem}
The uniform bound in Corollary \ref{cortheta} implies the convergence in distribution of $\sqrt{n}(\hat \theta_n-\theta)/\sigma$ to the standard normal as $n\to\infty$. Thus, under the same conditions, Corollary \ref{cortheta} directly yields the central limit theorem established in \cite{G-Z}.
\end{rem}
Consider the index
\beq\label{h}
h_{\mu,\nu}=\sum_{i=1}^\infty p_i^\mu(1-p_i)^\nu
\deq
for $\mu > 0$ and $\nu \ge 0$. When $\mu = 2$ and $\nu = 0$, this reduces to Simpson's index introduced in Simpson \cite{Simpson}. When $\mu$ and $\nu$ are integers, this corresponds to the generalized Simpson's indices introduced in Zhang and Zhou \cite{ZhZh10} and further studied in Grabchak et al. \cite{G}.
When $\mu > 0$ and $\nu = 0$, this corresponds to the R\'enyi equivalent entropy introduced in Zhang and Grabchak \cite{Z-G}.

Note that for $h_{\mu,\nu}$, where $\mu > 0$ and $\nu \ge 0$, we have
$g(x)=x^\mu(1-x)^\nu$ and
$$
g'(x)=\mu x^{\mu-1}(1-x)^\nu-\nu x^\mu(1-x)^{\nu-1}.
$$
Furthermore, we recall the following properties.
\begin{prop}\label{prop1} \cite[Proposition 3.1]{G-Z} When $\mu\ge 1$ and $\nu\in\{0\}\cup [1,\infty)$, $g'$ is $\beta$-H\"older continuous with
$$
\beta=\begin{cases} \min\{\mu-1, \nu-1, 1\}\ \ & \text{if}\ \ \mu, \nu >1\\
\min\{\mu-1,  1\}\ \ & \text{if}\ \ \mu>1, \nu\in\{0,1\}\\
\min\{\nu-1, 1\}\ \ & \text{if}\ \ \mu=1, \nu >1\\
1\ \ & \text{if}\ \ \mu=1,  \nu\in\{0,1\}\\
\end{cases}
$$
\end{prop}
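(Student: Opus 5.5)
We need to prove Proposition \ref{prop1}: for $g(x)=x^\mu(1-x)^\nu$ on $[0,1]$ with $\mu\ge1$ and $\nu\in\{0\}\cup[1,\infty)$, the derivative $g'$ is $\beta$-H\"older continuous with the stated $\beta$. The plan rests on two observations. First, on $[0,1]$ a $\beta$-H\"older function is also $\beta'$-H\"older for every $\beta'\le\beta$, since $|x-y|^\beta\le|x-y|^{\beta'}$. Second, sums and products of bounded H\"older functions are H\"older with the minimum exponent, because $|fg(x)-fg(y)|\le\|f\|_\infty|g(x)-g(y)|+\|g\|_\infty|f(x)-f(y)|$. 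It therefore suffices to show that each building block of
$$
g'(x)=\mu x^{\mu-1}(1-x)^\nu-\nu x^\mu(1-x)^{\nu-1}
$$
is H\"older of the appropriate order.

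The basic lemma is that $x\mapsto x^a$ is $\min\{a,1\}$-H\"older on $[0,1]$ for every $a>0$, and $x^0\equiv1$ is trivially Lipschitz.
\begin{itemize}
\item If $a\ge1$, the mean value theorem gives $|x^a-y^a|\le a|x-y|$.
\item If $0<a<1$, subadditivity of $t\mapsto t^a$ gives $|x^a-y^a|\le|x-y|^a$.
\end{itemize}
By the reflection $x\mapsto1-x$, the same holds for $(1-x)^b$ with exponent $\min\{b,1\}$ (or Lipschitz if $b=0$).

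Now we track the exponents in each case.
\begin{itemize}
\item \textbf{Case $\mu,\nu>1$.} The first term $x^{\mu-1}(1-x)^\nu$ is $\min\{\mu-1,1\}$-H\"older, since $\nu>1$ makes $(1-x)^\nu$ Lipschitz. The second term $x^\mu(1-x)^{\nu-1}$ is $\min\{\nu-1,1\}$-H\"older. Combining them gives $\beta=\min\{\mu-1,\nu-1,1\}$.
\item \textbf{Case $\mu>1$, $\nu\in\{0,1\}$.} If $\nu=0$ the second term vanishes. If $\nu=1$ it is $x^\mu$, which is Lipschitz. The first term is $\min\{\mu-1,1\}$-H\"older, so $\beta=\min\{\mu-1,1\}$.
\item \textbf{Case $\mu=1$, $\nu>1$.} The first term is $(1-x)^\nu$, which is Lipschitz, and the second is $x(1-x)^{\nu-1}$, which is $\min\{\nu-1,1\}$-H\"older. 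Hence $\beta=\min\{\nu-1,1\}$.
\item \textbf{Case $\mu=1$, $\nu\in\{0,1\}$.} Here $g'$ is a polynomial of degree at most one, hence Lipschitz, and $\beta=1$.
\end{itemize}

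There is one point requiring care. When $\nu=1$ the factor $(1-x)^{\nu-1}$ is the constant $1$ and causes no singularity. The restriction $\nu\notin(0,1)$ is exactly what prevents $(1-x)^{\nu-1}$ from blowing up at $x=1$; in that range $g'$ would be unbounded and so not H\"older at all. Similarly, $\mu\ge1$ keeps $x^{\mu-1}$ bounded near $0$.

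The only genuinely nontrivial step is the subadditivity inequality $|x^a-y^a|\le|x-y|^a$ for $0<a<1$. It follows from concavity of $t\mapsto t^a$ with value $0$ at $0$: for $y\ge x\ge0$ one has $y^a\le x^a+(y-x)^a$. The remainder is bookkeeping of exponents via the product rule above.
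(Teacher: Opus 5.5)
Your proof is correct. The paper gives no proof of this proposition; it quotes Proposition 3.1 of Grabchak and Zhang and uses it as a black box, so there is no in-paper argument to compare against.

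Your argument rests on three elementary facts:
\begin{itemize}
\item $x\mapsto x^a$ is $\min\{a,1\}$-H\"older on $[0,1]$, by the mean value theorem for $a\ge1$ and by concavity-induced subadditivity for $0<a<1$;
\item bounded H\"older functions are closed under sums and products, with the smaller exponent;
\item on $[0,1]$, a $\beta$-H\"older function is $\beta'$-H\"older for every $\beta'\le\beta$.
\end{itemize}
Together these give a complete, self-contained verification, and the exponent bookkeeping is right in all four cases.

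One line is worth adding. Theorem \ref{thmtheta} and Lemma \ref{lem3} require $g$ to be differentiable on the closed interval $[0,1]$. You should note that the one-sided derivatives of $g$ at $0$ and $1$ exist and coincide with the displayed formula for $g'$, using the convention $0^0=1$ when $\mu=1$ or $\nu=1$. This is exactly where the hypotheses $\mu\ge 1$ and $\nu\in\{0\}\cup[1,\infty)$ are used. Your closing remarks about the singularity for $\nu\in(0,1)$ and boundedness of $x^{\mu-1}$ near $0$ already contain this idea, so it is a matter of stating it explicitly.
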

In order to illustrate the convergence rate more clearly, a concrete example for Theorem \ref{thmtheta} is given below.
\begin{exa}
Consider a sequence of distributions of the form
$$p_{n,1}=\frac{1}{2}+\frac{1}{2n^\lambda}, \ \ \ p_{n,2}=\frac{1}{2}-\frac{1}{2n^\lambda},$$
where $\lambda \in (0,1/2)$ is a real number and $p_{n,i}=0$ for all $i=3,4,\cdots$. Clearly, this approaches a uniform distribution as $n\to \infty$.  Suppose we consider Simpson's diversity index, which
corresponds to $g(x)=x^2$. In this case, $g'(x)=2x$ is $1$-H\"{o}lder continuous. Thus
$$
\aligned
\sigma_n^2=&\frac{1}{2}\left(1+\frac{1}{n^\lambda}\right)^3+\frac{1}{2}\left(1-\frac{1}{n^\lambda}\right)^3
-\frac{1}{4}\left(\left(1+\frac{1}{n^\lambda}\right)^2+\left(1-\frac{1}{n^\lambda}\right)^2\right)^2
\\
=&\frac{1}{n^{2\lambda}}-\frac{1}{n^{4\lambda}}\sim \frac{1}{n^{2\lambda}}.
\endaligned
$$
For the case we have
$$\sup_x\left|\pp\left(\frac{\sqrt{n}}{\sigma_n}(\hat \theta_n-\theta_n)\le x\right)-\Phi(x)\right|\le
\frac{C}{n^{1/2-\lambda/2}}.$$
\end{exa}
\section{Shannon's entropy and its bias-corrected estimators}
We now turn to Shannon’s entropy, one of the most well-known diversity indices. It is an exception to the general framework established in Section \ref{1} since its function $g(x) = -x \ln x$ has an unbounded derivative near zero. In this case, the assumptions of Corollary \ref{cortheta} do not hold, thus requiring a distinct analytical approach to establish its Berry-Esseen bound.

Let the sequence $\{X_n, n\ge 1\}$ and the distribution
$\mathbf{P}$ be defined as before. Shannon's entropy \cite{Shannon} was introduced as
$$
H=-\sum_{i=1}^\infty p_i\ln p_i,
$$
and its plug-in estimator is given by
\beq\label{plug}
\hat H_n=-\sum_{i=1}^\infty \hat p_i\ln \hat p_i
\deq
where
$$
\hat p_{i}:=\frac{1}{n}\sum_{k=1}^nI_{\{X_{k}=a_i\}}.
$$

Asymptotic normality for the plug-in estimator of Shannon's entropy on a countably infinite alphabet was proved by Zhang and Zhang \cite{ZhZh12}, relying on the result of Paninski \cite{P}. Building upon this foundation, we extend the analysis to obtain a non-asymptotic convergence rate.
\begin{thm}\label{thmH}
Fix $0< \delta< \frac{1}{2}$. Let $\{X_n, n\ge 1\}$ be a sequence of independent and identically distributed random variables with a common nonuniform distribution $\{p_i, i\ge 1\}$ satisfying $\ee|\ln p(X_1)|^{2+\delta} < \infty$. Assume that there exists an integer valued function $K(n)$ such that,
\beq\label{thmH-0}
K(n)\le n^{\frac{1}{2}-\delta},\ \ n^{\frac{1}{2}+\delta}\ln n\sum_{i=K(n)}^{\infty}p_i \leq C
\ \ \text{and}\ \ -n^{\frac{1}{2}+\delta}\sum_{i=K(n)}^{\infty}p_i\ln p_i\leq C,
\deq
then we have
\beq\label{thmH-1}
\sup\limits_x\left|\pp\left(\frac{\sqrt{n}(\hat H_n-H)}{\sigma}<x\right)-\Phi(x)\right|
 \leq Cn^{-\frac{\delta}{2}},
\deq
where $\Phi$ denotes the standard normal distribution function and $\sigma^2=\Var(\ln p(X_1))$.
\end{thm}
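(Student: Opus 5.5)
The plan is to split $\hat H_n - H$ into a linear part, a quadratic (Taylor-remainder) part and a tail part, and then combine a classical Berry--Esseen bound for the linear part with a standard perturbation lemma: if $|\pp(W\le x)-\Phi(x)|\le \epsilon_1$ uniformly in $x$, then
$$
\sup_x|\pp(W+R\le x)-\Phi(x)|\le \epsilon_1+\frac{a}{\sqrt{2\pi}}+\pp(|R|>a).
$$
Write $K=K(n)$ and split the sum defining $H$ and $\hat H_n$ at the index $K$:
$$
\hat H_n-H=\sum_{i<K}\bigl(g(\hat p_i)-g(p_i)\bigr)+\sum_{i\ge K}\bigl(g(\hat p_i)-g(p_i)\bigr),\qquad g(x)=-x\ln x .
$$
For $i<K$ we Taylor expand: $g(\hat p_i)-g(p_i)=-(1+\ln p_i)(\hat p_i-p_i)+R_i$, where $R_i$ is controlled by the second derivative on the segment between $p_i$ and $\hat p_i$.

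\textbf{The linear term.} Since $\sum_i(\hat p_i-p_i)=0$, the linear terms summed over all $i$ equal $-\frac1n\sum_{k=1}^n\bigl(\ln p(X_k)-\ee\ln p(X_1)\bigr)$. This is an i.i.d.\ mean of variables with variance $\sigma^2>0$; positivity of $\sigma^2$ is exactly where the nonuniformity assumption enters. The moment condition $\ee|\ln p(X_1)|^{2+\delta}<\infty$ makes the Berry--Esseen theorem for $2+\delta$ moments (Katz--Petrov) applicable, giving a rate $Cn^{-\delta/2}$. The linear terms for $i\ge K$ must then be added back as an error. Their contribution is bounded in $L^1$ by roughly $\sum_{i\ge K}|1+\ln p_i|\sqrt{p_i/n}$, and this is handled together with the tail.

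\textbf{The tail term.} Use the crude bound
$$
\Bigl|\sum_{i\ge K}\bigl(g(\hat p_i)-g(p_i)\bigr)\Bigr|\le \sum_{i\ge K}|\hat p_i\ln \hat p_i|+\sum_{i\ge K}|p_i\ln p_i|.
$$
Since $n\hat p_i$ is an integer, $\hat p_i\ge 1/n$ whenever it is nonzero, so $|\ln\hat p_i|\le \ln n$. Hence the expectation of the tail is at most $\ln n\sum_{i\ge K}p_i-\sum_{i\ge K}p_i\ln p_i$. After multiplying by $\sqrt n$, condition \eqref{thmH-0} makes this $O(n^{-\delta})$. Markov's inequality with threshold $a=n^{-\delta/2}$ then gives $\pp(\sqrt n|\text{tail}|>a)\le Cn^{-\delta/2}$. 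The leftover linear tail is treated the same way, using the same two sums.

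\textbf{The head remainder.} This is the main obstacle: $g''(x)=-1/x$ blows up near $0$, so $\hat p_i$ cannot be allowed to be much smaller than $p_i$. I would use the exact bound
$$
|R_i|\le \frac{(\hat p_i-p_i)^2}{\min(\hat p_i,p_i)}.
$$
The alternative, the entropy-type identity $g(\hat p)-g(p)+(1+\ln p)(\hat p-p)=-\hat p\ln(\hat p/p)+(\hat p-p)$, is of chi-square type and bounded by $(\hat p-p)^2/p$. Either way, $\ee\sum_{i<K}|R_i|\lesssim \sum_{i<K}\frac{p_i(1-p_i)}{np_i}\le K/n$, and so
$$
\sqrt n\,\ee\sum_{i<K}|R_i|\le K/\sqrt n\le n^{-\delta}.
$$
Markov's inequality again gives a contribution $O(n^{-\delta/2})$. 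The step I must check carefully is that $\hat p\ln(\hat p/p)-(\hat p-p)\le(\hat p-p)^2/p$ holds for all $\hat p\ge0$, including $\hat p=0$; it does, since $\ln t\le t-1$. Collecting the four errors, each of order $n^{-\delta/2}$, yields \eqref{thmH-1}.
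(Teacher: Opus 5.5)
Your proposal is correct and is essentially the paper's proof: the full linear term $-\frac{1}{n}\sum_{k}(\ln p(X_k)-\ee\ln p(X_1))$ gets a $(2+\delta)$-moment Berry--Esseen bound, and everything else goes through the perturbation lemma with $a=n^{-\delta/2}$ and Markov's inequality, using a chi-square bound of order $K(n)/n$ on the head and the crude bound $-\hat p_i\ln\hat p_i\le \hat p_i\ln n$ on the tail; the only real difference is that you control the head termwise via $0\le \hat p\ln(\hat p/p)-(\hat p-p)\le (\hat p-p)^2/p$, whereas the paper lumps all letters $i\ge K(n)$ into a single letter so that the head Kullback--Leibler sum is nonnegative and dominated by the chi-square statistic. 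Two small fixes: bound the re-added linear tail with $\ee|\hat p_i-p_i|\le 2p_i$ rather than $\sqrt{p_i/n}$, since the hypotheses control only $\sum_{i\ge K}p_i$ and $\sum_{i\ge K}p_i|\ln p_i|$ and not $\sum_{i\ge K}\sqrt{p_i}\,|\ln p_i|$; and drop the $\min(\hat p_i,p_i)$ form of the remainder, whose expectation is infinite because $\hat p_i=0$ with positive probability, so only the chi-square form works, not ``either way''.
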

\begin{rem}
If $\delta=0$, the conditions in Theorem \ref{thmH} reduce to $K(n)\le n^{1/2}$ and
$$
n^{\frac{1}{2}+\delta}\ln n\sum_{i=K(n)}^{\infty}p_i \leq C,
\ \ \ -n^{\frac{1}{2}+\delta}\sum_{i=K(n)}^{\infty}p_i\ln p_i\leq C.
$$

In this case, the theorem does not provide an explicit convergence rate, but the asymptotic normality still holds. Indeed, under the assumption $\ee|\ln p(X_1)|^2 < \infty$  and the given conditions, the Lindeberg condition can be verified for the sequence, which implies the central limit theorem \cite{ZhZh12}. The absence of the rate is due to the technical limitations of the proof rather than the failure of convergence.
\end{rem}
Next we consider two widely used bias-corrected estimators: the Miller-Madow estimator and the jackknife estimator. Our goal is to establish Berry-Esseen bounds for these estimators, quantifying their convergence rates under distributional conditions similar to those in Theorem \ref{thmH}.

The Miller-Madow estimator \cite{M} applies the classic strategy of first-order bias correction through a simple adjustment term based on the number of observed symbols. Let $Y_i=\sum_{k=1}^nI_{\{X_{k}=i\}}$ be the observed letter counts in the sample, and let $\hat m=\sum_{i=1}^\infty I_{\{Y_i>0\}}$ be the number of letters observed in the sample. Then the Miller-Madow entropy estimator can be written as
$$\hat H_{MM}=\hat H_n+\frac{\hat m-1}{2n},$$
where $\hat H_n$ denotes the plug-in estimator of Shannon's entropy as defined in \ref{plug}.
\begin{thm}\label{thmM}
Let $\{X_n, n\ge 1\}$ be a sequence of independent and identically distributed random variables with common nonuniform distribution $\{p_i, i\ge 1\}$ satisfying
$\ee|\ln p(X_1)|^{2+\delta} < \infty$. Assume that there exist an integer valued function $K(n)$  and $0< \delta< \frac{1}{2}$ such that
\beq\label{thmM-1}
K(n)\le n^{\frac{1}{2}-\delta},\ \ n^{\frac{1}{2}+\delta}\ln n\sum_{i=K(n)}^{\infty}p_i \leq C,
\ \ -n^{\frac{1}{2}+\delta}\sum_{i=K(n)}^{\infty}p_i\ln p_i\leq C,
\deq
then we have
\beq\label{thmM-3}
\sup_x\left|\pp\left(\frac{\sqrt{n}(\hat{H}_{MM}-H)}{\sigma}<x\right)-\Phi(x)\right|
 \leq Cn^{-\frac{\delta}{2}},
\deq
where $\Phi$ denotes the standard normal distribution function and $\sigma^2=\Var(\ln p(X_1))$.
\end{thm}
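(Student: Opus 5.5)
The plan is to reduce Theorem \ref{thmM} to Theorem \ref{thmH} with a standard perturbation lemma for Berry--Esseen bounds. For random variables $U,V$ and any $\varepsilon>0$,
$$
\sup_x\left|\pp(U+V<x)-\Phi(x)\right|\le \sup_x\left|\pp(U<x)-\Phi(x)\right|+\frac{\varepsilon}{\sqrt{2\pi}}+\pp(|V|>\varepsilon).
$$
We apply it with $U=\sqrt{n}(\hat H_n-H)/\sigma$ and
$$
V=\frac{\sqrt{n}(\hat m-1)}{2n\sigma}.
$$
The hypotheses (\ref{thmM-1}) coincide with those of Theorem \ref{thmH}, so the first term is at most $Cn^{-\delta/2}$. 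It therefore suffices to choose $\varepsilon=n^{-\delta/2}$ and show that $\pp(|V|>\varepsilon)\le Cn^{-\delta/2}$.

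Since $V\ge 0$, this amounts to showing that $\hat m/\sqrt{n}$ is typically of order at most $n^{-\delta/2}$. We split $\hat m$ using $K(n)$:
$$
\hat m\le K(n)+\sum_{i\ge K(n)} I_{\{Y_i>0\}}.
$$
The first part is at most $n^{1/2-\delta}$, so it contributes at most $n^{-\delta}/(2\sigma)\le \varepsilon/2$ to $V$ once $n$ is large; small $n$ are absorbed into $C$. For the tail part, $\ee I_{\{Y_i>0\}}=1-(1-p_i)^n\le np_i$ gives
$$
\ee\sum_{i\ge K(n)} I_{\{Y_i>0\}}\le n\sum_{i\ge K(n)}p_i\le \frac{Cn^{1/2-\delta}}{\ln n},
$$
by the second condition in (\ref{thmM-1}). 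By Markov's inequality,
$$
\pp\Big(\sum_{i\ge K(n)} I_{\{Y_i>0\}}>\sigma\varepsilon\sqrt{n}\Big)\le \frac{Cn^{1/2-\delta}}{\ln n\cdot n^{1/2-\delta/2}}\le Cn^{-\delta/2}.
$$
Combining the two parts, $\pp(|V|>\varepsilon)\le Cn^{-\delta/2}$. Here $\sigma>0$ because the distribution is nonuniform, and it is a fixed constant.

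The main point to check is that the tail count of observed letters is controlled by the assumption $n^{1/2+\delta}\ln n\sum_{i\ge K(n)}p_i\le C$. The bound $1-(1-p_i)^n\le np_i$ combined with Markov's inequality gives exactly the required order, with the $\ln n$ factor to spare. If Markov turned out to be too crude, one could bound the variance of $\sum I_{\{Y_i>0\}}$ instead, using the negative association of these indicators. As computed above, though, Markov suffices. The rest is the routine perturbation argument, which is monotone in $x$ and handles the strict inequality $<x$ in (\ref{thmM-3}) just as in (\ref{thmH-1}).
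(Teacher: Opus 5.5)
Your proposal is correct and follows the paper's proof. You use Lemma \ref{lem2} with $a=n^{-\delta/2}$, Theorem \ref{thmH} for the plug-in term, and Markov's inequality with $1-(1-p_i)^n\le np_i$ and the split at $K(n)$ for the correction; the only difference is cosmetic: you bound the first $K(n)$ indicators deterministically, whereas the paper keeps them inside Markov via $\ee\hat m\le 1+K(n)+n\sum_{i\ge K(n)}p_i$, and both give the same $Cn^{-\delta/2}$ rate.
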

The jackknife entropy estimator proposed by \cite{Z-J} is another commonly used estimator designed to reduce the bias of the plug-in estimator. For each $j\in\{1,2,\dots,n\}$, let $\hat H_n^{(j)}$ be the plug-in estimator computed from the subsample obtained by deleting the $j$th observation. That is,
$$
\hat H_n^{(j)}=-\sum_{i=1}^\infty \hat p_i^{(j)}\ln \hat p_i^{(j)},
$$
where
$$
\hat p_i^{(j)}=\frac{1}{n-1}\left(\sum_{k=1}^n I_{\{X_k=i\}}-I_{\{X_j=i\}}\right).
$$
The jackknife entropy estimator can then be written as
$$
\hat H_{JK}=n\hat H_n-(n-1)\frac{\sum_{j=1}^n \hat H^{(j)}}{n}.
$$
\begin{thm}\label{thmJ}
Let $\{X_n, n\ge 1\}$ be a sequence of independent and identically distributed random variables with common nonuniform distribution $\{p_i, i\ge 1\}$ satisfying $\ee|\ln p(X_1)|^{2+\delta} < \infty$. Assume that there exists an integer valued function $K(n)$ such that, for some $0< \delta<\frac{1}{2}$,
\beq\label{thmJ-1}
K(n)\le n^{\frac{1}{2}-\delta},\ \ n^{\frac{1}{2}+\delta}\ln n\sum_{i=K(n)}^{\infty}p_i \leq C
\ \ \text{and}\ \ -n^{\frac{1}{2}+\delta}\sum_{i=K(n)}^{\infty}p_i\ln p_i\leq C.
\deq
If there exists an $\epsilon \in (1/2, 1)$ with
\beq\label{thmJ-2}
\sum_{i=1}^{\infty}p_{i}^{1-\epsilon}<\infty,
\deq
then we have
\beq\label{thmJ-3}
\sup_x\left|\pp\left(\frac{\sqrt{n}}{\sigma}(\hat H_{JK}-H)\le x\right)-\Phi(x)\right|\le
Cn^{-\frac{\delta}{2}}+Cn^{\frac{1}{4}-\frac{\epsilon}{2}},
\deq
where $\Phi$ denotes the standard normal distribution function and $\sigma^2=Var(\ln p(X_1))$.
\end{thm}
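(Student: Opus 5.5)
We will reduce Theorem \ref{thmJ} to Theorem \ref{thmH} by a perturbation argument. Write
$$
\frac{\sqrt{n}}{\sigma}(\hat H_{JK}-H)=\frac{\sqrt{n}}{\sigma}(\hat H_n-H)+\frac{\sqrt{n}}{\sigma}R_n,\qquad R_n:=\hat H_{JK}-\hat H_n=(n-1)\Big(\hat H_n-\frac1n\sum_{j=1}^n\hat H_n^{(j)}\Big).
$$
We will use the elementary inequality
$$
\sup_x|\pp(U+V\le x)-\Phi(x)|\le \sup_x|\pp(U\le x)-\Phi(x)|+\frac{\varepsilon_n}{\sqrt{2\pi}}+\pp(|V|>\varepsilon_n).
$$
Take $U=\sqrt n(\hat H_n-H)/\sigma$, which is controlled by Theorem \ref{thmH} with error $Cn^{-\delta/2}$; the hypotheses \eqref{thmJ-1} are exactly those of Theorem \ref{thmH}. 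Take $V=\sqrt nR_n/\sigma$. It then remains to choose $\varepsilon_n$ and bound $\pp(|V|>\varepsilon_n)$ by Markov's inequality with the first moment, optimizing to get $n^{1/4-\epsilon/2}$.

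The key computation is an exact representation of $R_n$. Deleting observation $j$ with $X_j=a_i$ changes only the count $Y_i\to Y_i-1$, and the normalization $n\to n-1$. Grouping the $j$'s by letter gives
$$
\frac1n\sum_j \hat H_n^{(j)}=\sum_i \frac{Y_i}{n}\Big[-\sum_{l\ne i}\tfrac{Y_l}{n-1}\ln\tfrac{Y_l}{n-1}-\tfrac{Y_i-1}{n-1}\ln\tfrac{Y_i-1}{n-1}\Big].
$$
Using $f(y)=y\ln y$, this rewrites $R_n$ as a sum over observed letters of $(n-1)$ times second differences $\frac{Y_i}{n}\big[f(\tfrac{Y_i}{n-1})-f(\tfrac{Y_i-1}{n-1})\big]$ minus the corresponding plug-in terms, plus a global term from renormalization. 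After simplification, the $\ln(n-1)$ contributions cancel because $\sum_i Y_i=n$.

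We then bound each letter's contribution in absolute value by a Taylor/mean-value estimate. Since $f''(y)=1/y$, each summand is of order $\frac{1}{n}\min\{1,\,1/Y_i\}\cdot$const. Thus
$$
|R_n|\le \frac{C}{n}\sum_i I_{\{Y_i>0\}}\le\frac{C\hat m}{n},
$$
possibly with a logarithmic factor for $Y_i=1$ terms.

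To bound $\ee\hat m$ we use \eqref{thmJ-2}:
$$
\ee\hat m=\sum_i(1-(1-p_i)^n)\le\sum_i\min(1,np_i)\le n^{1-\epsilon}\sum_i p_i^{1-\epsilon}.
$$
This uses $\min(1,x)\le x^{1-\epsilon}$. Hence $\ee|V|\le C\sqrt n\,n^{-\epsilon}=Cn^{1/2-\epsilon}$. Choosing $\varepsilon_n=n^{1/4-\epsilon/2}$ balances $\varepsilon_n$ against $\ee|V|/\varepsilon_n=Cn^{1/4-\epsilon/2}$, which yields \eqref{thmJ-3}. Here $\epsilon>1/2$ is what makes this term vanish.

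The main obstacle is the bound on $R_n$. The jackknife is designed so that the leading $O(\hat m/n)$ bias-type terms appear with a specific sign, and the crude per-letter estimate must not lose more than a constant or $\ln n$ factor. In particular, singletons ($Y_i=1$) produce terms like $\frac1n\ln\frac{1}{n-1}$, which carry a $\ln n$. We will handle this either by checking that these logs cancel against the renormalization term, or by absorbing $\ln n$ via a slightly smaller exponent. That requires replacing $\epsilon$ by $\epsilon'<\epsilon$ still in $(1/2,1)$, which is not possible without changing the rate. So we will first try to verify the cancellation explicitly via $f(y)-f(y-h)$ identities.
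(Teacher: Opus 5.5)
Your treatment of the main term matches the paper: the perturbation inequality (Lemma~\ref{lem2}), Theorem~\ref{thmH} for $\sqrt n(\hat H_n-H)/\sigma$, then Markov for $R_n=\hat B_{JK}$ with the same choice of $\varepsilon_n$. Your handling of $R_n$ is a genuinely different route, and it works. As written, though, its decisive step is only conjectured. You leave open whether singletons cost a factor $\ln n$, and you correctly note this would break the rate.

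The Taylor heuristic you give is also off: each observed letter contributes order $1/n$, not $\frac1n\min\{1,1/Y_i\}$. So an exact computation is needed, not a mean-value estimate. The cancellation does occur. Write $\hat H_n=\ln n-\frac1n\sum_lY_l\ln Y_l$ and, for $X_j=a_i$, $\hat H_n^{(j)}=\ln(n-1)-\frac{1}{n-1}\big(\sum_lY_l\ln Y_l-Y_i\ln Y_i+(Y_i-1)\ln(Y_i-1)\big)$. Using $\sum_iY_i=n$, one gets
\[
R_n=(n-1)\ln\frac{n}{n-1}-\frac1n\sum_{i:\,Y_i\ge 2}Y_i(Y_i-1)\ln\frac{Y_i}{Y_i-1}
=\frac1n\Big(\sum_{i:\,Y_i\ge1}\psi(Y_i)-\psi(n)\Big),
\]
where $\psi(1)=1$ and $\psi(y)=y-y(y-1)\ln\frac{y}{y-1}$ for $y\ge2$. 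With $t=1/(y-1)$ this is $\psi(y)=(1+t)\big(t-\ln(1+t)\big)/t^2$. The inequalities $t<(1+t)\ln(1+t)<t+t^2/2$ for $t>0$ give $\frac12<\psi(y)<1$ for $y\ge2$.

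Hence $0\le R_n\le \hat m/n$ pathwise, with no logarithm: a singleton contributes exactly $1/n$. With this identity inserted, your estimate $\ee\hat m\le n^{1-\epsilon}\sum_ip_i^{1-\epsilon}$, Markov, and $\varepsilon_n=n^{1/4-\epsilon/2}$ complete the proof.

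The paper never examines $R_n$ pathwise. It imports from Chen et al.\ that $\hat B_{JK}\ge 0$ and $\ee\hat B_{JK}=(n-1)(B_n-B_{n-1})$ (Lemma~\ref{lemJ3}), so $\ee|\hat B_{JK}|=\ee\hat B_{JK}$. It then bounds each letter's bias increment by Lemma~\ref{lemJ} and uses the maximization in Lemma~\ref{lemJ2} with \eqref{thmJ-2} to get $\ee\hat B_{JK}\le Cn^{-\epsilon}$. This is the same order as your $\ee\hat m/n$.

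Your identity replaces these three borrowed lemmas by one elementary computation, which also reproves nonnegativity, and it gives more. Since $R_n\le\hat m/n$ is a Miller--Madow-type bound, you can use the estimate from the proof of Theorem~\ref{thmM}: $\ee\hat m\le K(n)+n\sum_{i\ge K(n)}p_i\le Cn^{1/2-\delta}$. This gives $\pp(\sqrt n|R_n|/\sigma>a)\le C/(an^{\delta})$, and taking $a=n^{-\delta/2}$ yields the rate $Cn^{-\delta/2}$ under \eqref{thmJ-1} alone. Assumption \eqref{thmJ-2} then becomes unnecessary.
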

Next, we provide some examples to verify the conditions of Theorems \ref{thmH}, \ref{thmM}, and \ref{thmJ}.
\begin{exa}
For any $0< \delta<\frac{1}{2}$ and $\lambda>\frac{2}{1-2\delta}$, let $p_i=C_\lambda i^{-\lambda}$ for all $i=1,2, \cdots$, where
$$
C_\lambda=\frac{1}{\sum_{i=1}^\infty i^{-\lambda}}<\infty.
$$
It is easy to check that $\sum_{i=1}^\infty p_i|\ln p_i|^{2+\delta}<\infty$. And let $K(n)=n^{\frac{1}{\lambda}}$, we have
$$
\aligned
&n^{\frac{1}{2}+\delta}\sum_{i=K(n)}^{\infty}p_i\ln n\sim
n^{\frac{1}{2}+\delta}\ln n\int_{K(n)}^{\infty}\frac{C_\lambda}{x^\lambda} dx
=\frac{C_\lambda\ln n}{\lambda-1}n^{\frac{1}{\lambda}-(\frac{1}{2}-\delta)}\leq \frac{C_\lambda}{\lambda-1},
\endaligned
$$
and
$$
\aligned
&-n^{\frac{1}{2}+\delta}\sum_{i=K(n)}^{\infty}p_i\ln p_i\sim
n^{\frac{1}{2}+\delta}\int_{K(n)}^{\infty}\frac{C_\lambda}{x^\lambda}\ln \left(\frac{x^\lambda}{C_\lambda}\right)dx\\
=&n^{\frac{1}{2}+\delta}(K(n))^{1-\lambda}\left(\frac{C_\lambda\lambda\ln K(n)}{\lambda-1}+\frac{ C_\lambda\lambda}{(\lambda-1)^2}-\frac{C_\lambda\ln C_\lambda}{\lambda-1}\right)\\
\sim&\frac{C_\lambda\lambda\ln K(n)}{\lambda-1}n^{\frac{1}{2}+\delta}(K(n))^{1-\lambda}=\frac{C_\lambda\ln n}{\lambda-1}n^{\frac{1}{\lambda}-(\frac{1}{2}-\delta)}\leq \frac{C_\lambda}{\lambda-1}.
\endaligned
$$
Then the sufficient conditions of Theorem \ref{thmH} and Theorem \ref{thmM} hold.

Furthermore, for the $\frac{1}{2}< \epsilon< 1$ and $\lambda>\max\{\frac{2}{1-2\delta},\frac{1}{1-\epsilon}\}$, we have
$$
\sum_{i=1}^\infty p_i^{1-\epsilon}=C_\lambda^{1-\epsilon}\sum_{i=1}^\infty \frac{1}{i^{\lambda(1-\epsilon)}}<\infty.
$$
Then the sufficient conditions of Theorem \ref{thmJ} hold.
\end{exa}
\begin{exa}
For every $i=1,2, \cdots$ and any $\lambda>0$, let $p_i=C_\lambda e^{-\lambda i}$ where
$$
C_\lambda=
\frac{1}{\sum_{i=1}^\infty e^{-\lambda i}}<\infty.
$$
It's easy to check that $\sum_{i=1}^\infty p_i|\ln p_i|^{2+\delta}<\infty$. And for any $0< \delta< \frac{1}{2}$, let $K(n)= \lambda^{-1}\ln n$, we have
$$
\aligned
&n^{\frac{1}{2}+\delta}\sum_{i=K(n)}^{\infty}p_i\ln n\sim
n^{\frac{1}{2}+\delta}\ln n\int_{K(n)}^{\infty}C_\lambda e^{-\lambda x} dx
=\frac{C_\lambda}{\lambda}n^{\delta-\frac{1}{2}}\ln n\leq \frac{C_\lambda}{\lambda},
\endaligned
$$
and
$$
\aligned
&n^{\frac{1}{2}+\delta}\sum_{i=K(n)}^{\infty}\left(-p_i\ln p_i\right)\sim -n^{\frac{1}{2}+\delta}\int_{K(n)}^{\infty}C_\lambda e^{-\lambda x}\ln (C_\lambda e^{-\lambda x})dx\\
=&n^{\frac{1}{2}+\delta}C_\lambda e^{-\lambda K(n)}\left(K(n)+\frac{1}{\lambda}-\frac{\ln C_\lambda}{\lambda}\right)\\
\sim&C_\lambda K(n)n^{\frac{1}{2}+\delta}e^{-\lambda K(n)}
=\frac{C_\lambda}{\lambda}n^{\delta-\frac{1}{2}}\ln n\leq \frac{C_\lambda}{\lambda}.
\endaligned
$$
Then the sufficient conditions of Theorem \ref{thmH} and Theorem \ref{thmM} hold. Furthermore,
$$
\sum_{i=1}^\infty p_i^{1-\epsilon}=C_\lambda^{1-\epsilon}\sum_{i=1}^\infty \frac{1}{e^{(1-\epsilon)\lambda i}}<\infty.
$$
Then the sufficient conditions of Theorem \ref{thmJ} hold.
\end{exa}
\begin{exa}\label{gse}
For every $i=1,2, \cdots$, let $p_i=\frac{C}{i^4(\ln i)^2 }$ and  $K(n)= n^{\frac{1}{2}-\delta}$,
it's easy to check that $\sum_{i=1}^\infty p_i|\ln p_i|^{2+\delta}<\infty$. And for any $0< \delta< \frac{1}{2}$, we have
$$
\aligned
&n^{\frac{1}{2}+\delta}\sum_{i=K(n)}^{\infty}p_i\ln n\sim
n^{\frac{1}{2}+\delta}\ln n\int_{K(n)}^{\infty}\frac{C}{x^4(\ln x)^2} dx
\le C n^{\frac{1}{2}+\delta}\frac{\ln n}{3K^3(n)\ln^2 K(n)}\leq \frac{C}{3\left(\frac{1}{2}-\delta\right)^2},
\endaligned
$$
and
$$
\aligned
&n^{\frac{1}{2}+\delta}\sum_{i=K(n)}^{\infty}\left(-p_i\ln p_i\right)\sim
n^{\frac{1}{2}+\delta}\int_{K(n)}^{\infty}\frac{C}{x^4(\ln x)^2 }\ln \left(\frac{x^4(\ln x)^2 }{C}\right)dx\\
\sim& 4 C n^{\frac{1}{2}+\delta}\int_{K(n)}^{\infty}\frac{1}{x^4\ln x}dx
\le 4 C n^{\frac{1}{2}+\delta}\frac{1}{3K^3(n)\ln K(n)}
\leq \frac{4 C}{3\left(\frac{1}{2}-\delta\right)}.
\endaligned
$$
Thus, the sufficient conditions of Theorems \ref{thmH} and \ref{thmM} are satisfied. Furthermore, for $\frac{1}{2}< \epsilon\le \frac{3}{4}$, we have
$$
\sum_{i=1}^\infty p_i^{1-\epsilon}=C^{1-\epsilon}\sum_{i=1}^\infty \frac{1}{i^{4(1-\epsilon)}(\ln i)^{2(1-\epsilon)}}<\infty.
$$
Then the sufficient conditions of Theorem \ref{thmJ} hold.
\end{exa}
\section{Proofs of main results}
We state some useful lemmas to prove these main results.
\begin{lem}\label{lem1}\cite{C-S}
 Let $X_1, X_2, \cdots, X_n$ be independent and not necessarily identically distributed random variables with zero means and finite variances. Define $W=\sum_{k=1}^nX_k$ and assume that $Var(W)=1$. Let $F$ be the distribution function of $W$ and $\Phi$ be the standard normal distribution function. Then there exists an absolute constant $C$ such that for every real number $x$,
 $$
 |F(x)-\Phi(x)|\le C\sum_{i=1}^n\left(\frac{\ee X_i^2I_{\left\{|X_i|>1+|x|\right\}}}{(1+|x|)^2}+
 \frac{\ee |X_i|^3I_{\{|X_i|\le 1+|x|\}}}{(1+|x|)^3}\right).
 $$
 Furthermore, we have
 $$
 \sup_{x}|F(x)-\Phi(x)|\le C\sum_{i=1}^n\left(\ee X_i^2I_{\{|X_i|>1\}}+
 \ee |X_i|^3I_{\{|X_i|\le 1\}}\right).
 $$
\end{lem}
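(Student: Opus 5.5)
This is the Chen--Shao result, and I would prove it by Stein's method, truncating each summand at level $1$ (or at $1+|x|$ for the non-uniform part). Write
$$
\beta_2=\sum_{i=1}^n\ee X_i^2I_{\{|X_i|>1\}},\qquad \beta_3=\sum_{i=1}^n\ee|X_i|^3I_{\{|X_i|\le1\}}.
$$
We may assume $\beta_2+\beta_3\le 1$, since otherwise the uniform bound is trivial.

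\textbf{Stein setup.} Let $f_x$ solve $f'(w)-wf(w)=I_{\{w\le x\}}-\Phi(x)$. Recall the standard facts $0<f_x\le\sqrt{2\pi}/4$, $|f_x'|\le1$, and that $w\mapsto wf_x(w)$ is increasing. Put $W^{(i)}=W-X_i$ and
$$
K_i(t)=\ee X_i\bigl(I_{\{0\le t\le X_i\}}-I_{\{X_i\le t<0\}}\bigr),
$$
so that $K_i\ge0$ and $\sum_i\int K_i(t)\,dt=\Var(W)=1$. Independence of $X_i$ and $W^{(i)}$ gives
$$
\ee Wf_x(W)=\sum_i\ee\int f_x'(W^{(i)}+t)K_i(t)\,dt,
$$
and therefore
$$
\pp(W\le x)-\Phi(x)=\sum_i\ee\int\bigl[f_x'(W^{(i)}+X_i)-f_x'(W^{(i)}+t)\bigr]K_i(t)\,dt .
$$

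\textbf{Splitting the error.} I would split this sum according to whether $|X_i|\le1$ and whether $|t|\le1$.
\begin{itemize}
\item On the parts where $|X_i|>1$ or $|t|>1$, use $|f_x'|\le1$ and the bound $\int_{|t|>1}K_i(t)\,dt\le \ee X_i^2I_{\{|X_i|>1\}}$. These parts contribute $O(\beta_2)$.
\item On the remaining part, use $f_x'(w)=wf_x(w)+I_{\{w\le x\}}-\Phi(x)$. The smooth piece $wf_x(w)$ is monotone and Lipschitz in the relevant sense, so it contributes $O(\ee|X_i|\,|t|)$, which after integrating against $K_i$ is $O(\beta_3)$.
\item The indicator piece leaves terms of the form $\pp\bigl(x-\max(X_i,t)\le W^{(i)}\le x-\min(X_i,t)\bigr)$, weighted by $K_i(t)$.
\end{itemize}

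\textbf{Concentration inequality (main obstacle).} To control the indicator piece I need, for all $a\le b$,
$$
\pp(a\le W^{(i)}\le b)\le C(b-a)+C(\beta_2+\beta_3),
$$
uniformly in $i$. I would prove it by the same $K$-function identity applied to a piecewise-linear test function $f$ with $f'=I_{[a-\delta,b+\delta]}$, using the truncated variables $\bar X_j=X_jI_{\{|X_j|\le1\}}$ and taking $\delta\asymp\beta_3$. The quantity $\sum_j\ee\bar X_j^2$ stays at least $1-\beta_2$, hence bounded below, which lets the lower bound on $\ee W f(W)$ dominate. Substituting the concentration inequality back, the interval lengths are $|X_i|+|t|$, and integrating these against $K_i$ once more yields $O(\beta_2+\beta_3)$. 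This establishes the uniform bound.

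\textbf{Non-uniform version.} Here I truncate at level $1+|x|$ and use the sharper estimates on $f_x$ and $f_x'$, which decay like $e^{-x^2/2}$ away from $x$ and like $(1+|w|)^{-1}$ elsewhere. Tail terms are handled via
$$
\pp(W^{(i)}>(1+|x|)/2)\le C(1+|x|)^{-2},
$$
obtained from Chebyshev's inequality. I also need a non-uniform concentration inequality carrying a factor $e^{-(1+|x|)/2}$; I would obtain it from a Bennett-type exponential moment bound on the truncated sum, $\ee e^{\bar W/2}\le C$. Collecting terms gives the weights $(1+|x|)^{-2}$ and $(1+|x|)^{-3}$ in the statement.

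The concentration inequalities, especially the non-uniform one with only truncated second and third moments available, are the delicate step. The remaining estimates are routine bookkeeping.
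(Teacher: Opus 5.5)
The paper gives no proof of this lemma; it quotes it from Chen and Shao \cite{C-S}, whose proof is the Stein/$K$-function argument you outline, and your uniform half follows that argument. Three small repairs are needed there:
\begin{enumerate}
\item Reduce to $\beta_2+\beta_3\le 1/4$ (say) rather than $\le 1$. Otherwise $1-\beta_2$ is not bounded below.
\item In the concentration inequality, use the unaveraged identity $\ee X_j\bigl(f(W^{(i)})-f(W^{(i)}-X_j)\bigr)=\ee\int_{-X_j}^{0}X_jf'(W^{(i)}+t)\,dt$ rather than the $K_j$ form. Then $f'$ is evaluated at $W^{(i)}+t$, and the lower bound is $\pp(a\le W^{(i)}\le b)$ times the random weight $\sum_{j\ne i}|X_j|\min(\delta,|X_j|)$, whose fluctuation is $O(\delta)$.
\item The smooth piece is $O(\beta_2+\beta_3)$, not $O(\beta_3)$.
\end{enumerate}

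The non-uniform half, however, has a genuine gap. Let $\gamma(x)$ denote the right-hand side of the first inequality.

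First, truncating at level $1+|x|$ is incompatible with $\ee e^{\bar W/2}\le C$. An atom of $X_1$ at $1+|x|$ with mass $\frac12(1+|x|)^{-2}$ already gives $\ee e^{\bar X_1/2}\ge\frac12(1+|x|)^{-2}e^{(1+|x|)/2}$, and the $e^{-a/2}$ concentration inequality then fails. Chen and Shao instead truncate one-sidedly at the fixed level $1$, $\bar X_i=X_iI_{\{X_i\le 1\}}$. Then $\ee\bar X_i\le 0$, and Bennett's inequality gives $\ee e^{\bar W/2}\le e^{e^{1/2}-3/2}$. The level-one quantities produced by the Stein analysis of $\bar W$ then carry a factor $e^{-c|x|}$, and they are absorbed through $\beta_2+\beta_3\le (1+|x|)^3\gamma(x)$.

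Second, and this is the missing idea, with truncation at $1$ the truncation error $\sum_i\pp(X_i>1)$ is not $O(\gamma(x))$. For $x\ge 2$ one uses $W\ge\bar W$ and
\[
0\le \pp(W>x)-\pp(\bar W>x)\le\sum_i\pp(X_i>x/2)+\sum_i\pp(X_i>1)\,\pp(W^{(i)}>x/2).
\]
The last sum requires $\pp(W^{(i)}>x/2)=O((1+x)^{-3})$, up to terms $\sum_j\pp(X_j>cx)$. Your Chebyshev bound $C(1+x)^{-2}$ is too weak. If $X_i=2$ with probability $p$ (suitably centred), its share of $\gamma(x)$ is about $8p(1+x)^{-3}$, whereas $\pp(X_i>1)(1+x)^{-2}=p(1+x)^{-2}$.

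Keeping such mid-size jumps inside $\bar W$ does not avoid the problem. In the $K$-function identity a jump of size $s\in(2,1+|x|]$ carries weight of order $s^2$, which must be charged against $s^3(1+|x|)^{-3}$. So it has to be multiplied by a probability (that $W^{(i)}+t$ lies near $x$) of order $(1+|x|)^{-3}$, and Chebyshev again cannot supply this.

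What closes the argument is a Fuk--Nagaev bound such as $\pp(W^{(i)}\ge y)\le\sum_j\pp(X_j>y/2)+C(1+y)^{-4}$, obtained from Bennett's inequality for the sum truncated at $y/2$. Its first part is absorbed using $\sum_i\pp(X_i>1)\le\sum_i\ee X_i^2=1$, and its second part supplies the missing power of $(1+x)^{-1}$.
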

\begin{lem}\label{lem2} \cite{C-R}
For any random variables $X$, $Y$, real $x$ and constant $a>0$,
$$
\sup_{x}\big|\pp(X+Y\leq x)-\Phi(x)\big|\leq\sup_{x}\big|\pp(X\leq x)-\Phi(x)\big|+\frac{a}{\sqrt{2\pi}}+\pp\left(|Y|>a\right),
$$
where $\Phi(x)$ is the standard normal distribution.
\end{lem}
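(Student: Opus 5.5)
The inequality follows from a two-sided sandwich argument: shift the threshold by $a$ to absorb $Y$ on the event $\{|Y|\le a\}$, pay $\pp(|Y|>a)$ for the complementary event, and then move from $\Phi(x\pm a)$ back to $\Phi(x)$ using the bounded density of the standard normal. Write
$$D:=\sup_x\big|\pp(X\le x)-\Phi(x)\big|.$$
The one fact about $\Phi$ that we need is the Lipschitz bound $|\Phi(x\pm a)-\Phi(x)|\le a\sup_t\Phi'(t)=a/\sqrt{2\pi}$, which follows from the mean value theorem since $\Phi'(t)=e^{-t^2/2}/\sqrt{2\pi}\le 1/\sqrt{2\pi}$.

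\emph{Upper bound.} Fix $x$. On $\{|Y|\le a\}$ the event $\{X+Y\le x\}$ is contained in $\{X\le x+a\}$. Hence
$$\pp(X+Y\le x)\le \pp(X\le x+a)+\pp(|Y|>a)\le \Phi(x+a)+D+\pp(|Y|>a).$$
By the Lipschitz bound, $\Phi(x+a)\le\Phi(x)+a/\sqrt{2\pi}$, so
$$\pp(X+Y\le x)-\Phi(x)\le D+\frac{a}{\sqrt{2\pi}}+\pp(|Y|>a).$$

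\emph{Lower bound.} Again fix $x$. The event $\{X\le x-a\}\cap\{|Y|\le a\}$ is contained in $\{X+Y\le x\}$. Therefore
$$\pp(X+Y\le x)\ge \pp(X\le x-a)-\pp(|Y|>a)\ge \Phi(x-a)-D-\pp(|Y|>a).$$
Using $\Phi(x-a)\ge\Phi(x)-a/\sqrt{2\pi}$, this gives
$$\Phi(x)-\pp(X+Y\le x)\le D+\frac{a}{\sqrt{2\pi}}+\pp(|Y|>a).$$

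Combining the two one-sided estimates yields the bound on $|\pp(X+Y\le x)-\Phi(x)|$ for each $x$. The right-hand side does not depend on $x$, so taking the supremum over $x$ gives Lemma \ref{lem2}.

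There is no genuine obstacle. The only points to check are the event inclusions, which need no independence between $X$ and $Y$, and the Lipschitz constant of $\Phi$.
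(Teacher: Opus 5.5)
Your proof is correct and complete. The two event inclusions need no independence between $X$ and $Y$, and the term $a/\sqrt{2\pi}$ comes exactly from $\sup_t \Phi'(t)=1/\sqrt{2\pi}$. The paper gives no proof of its own and simply cites this lemma; your sandwich argument is the standard one behind it.

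The argument also goes through verbatim with the strict events $\{X+Y<x\}$ and $\{X<x\pm a\}$, which is the form actually used in \eqref{thm1-p5} and \eqref{thmH-p4}. This works because continuity of $\Phi$ gives $\sup_x|\pp(X<x)-\Phi(x)|=\sup_x|\pp(X\le x)-\Phi(x)|$.
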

\begin{lem}\label{lem3} \cite{G-Z}
Assume that the function $g: [0,1] \rightarrow \rr$ is differentiable on $[0,1]$ and its derivative $g'$ is $\beta$-H\"{o}lder
continuous, then for any $a \in (0,1]$ we can write
$$ g(x)=g(a)+g'(a)(x-a)+R_a(x),$$
where
$$|R_a(x)|\leq C|x-a|^{\beta+1}$$
for some $C > 0$.
\end{lem}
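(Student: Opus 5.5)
The plan is to apply the mean value theorem directly and then use the $\beta$-H\"older continuity of $g'$ from Definition \ref{def1} to control the remainder. The argument works for every $a\in[0,1]$, not only $a\in(0,1]$, and it produces the explicit constant $C=M$, where $M$ is the H\"older constant of $g'$. The point to stress is that $C$ does not depend on $a$ or $x$, since the lemma will later be applied with $a=p_{n,i}$ ranging over infinitely many values.

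\emph{Step 1: define the remainder.} Fix $a\in(0,1]$ and $x\in[0,1]$, and set
$$
R_a(x):=g(x)-g(a)-g'(a)(x-a).
$$
If $x=a$, then $R_a(x)=0$ and there is nothing to prove. So assume $x\neq a$.

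\emph{Step 2: apply the mean value theorem.} Since $g$ is differentiable on $[0,1]$, it is continuous on the closed interval between $a$ and $x$ and differentiable in its interior. Hence there is a $\xi$ strictly between $a$ and $x$ with $g(x)-g(a)=g'(\xi)(x-a)$. Substituting this into the definition of $R_a$ gives
$$
R_a(x)=\bigl(g'(\xi)-g'(a)\bigr)(x-a).
$$

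\emph{Step 3: use H\"older continuity of $g'$.} Definition \ref{def1} gives $|g'(\xi)-g'(a)|\le M|\xi-a|^\beta$. Because $\xi$ lies between $a$ and $x$, we have $|\xi-a|\le|x-a|$. Therefore
$$
|R_a(x)|\le M|\xi-a|^\beta|x-a|\le M|x-a|^{\beta+1},
$$
which is the claim with $C=M$.

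There is no real obstacle here; the only point needing care is the uniformity of $C$ noted above. As an alternative to Step 2, one could note that $g'$ is continuous and hence integrable. Then $R_a(x)=\int_a^x\bigl(g'(t)-g'(a)\bigr)\,dt$, and bounding the integrand by $M|t-a|^\beta$ yields the sharper constant $C=M/(\beta+1)$.
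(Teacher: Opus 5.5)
Your proof is correct: the mean value theorem gives $R_a(x)=\bigl(g'(\xi)-g'(a)\bigr)(x-a)$, and the H\"older bound with $|\xi-a|\le|x-a|$ yields $|R_a(x)|\le M|x-a|^{\beta+1}$. The paper does not prove this lemma itself but quotes it from Grabchak and Zhang, and your argument is the standard proof of the fact, so it matches in approach. Your emphasis that $C=M$ is uniform in $a$ is the point the paper relies on, and so is the observation that the argument also covers $a=0$, since the paper later applies the lemma with $a=p_{n,i}$, which may vanish.
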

\begin{lem}\label{lem4} \cite{G-Z}
If $\beta \in (0,1]$, then
\beq\label{lem4-1}
\ee[|\hat p_{n,i}-p_{n,i}|^{\beta+1}]\leq 4p_{n,i} n^{-\beta}
\deq
and there is a constant $C>0$ depending only on $\beta$ with
\beq\label{lem4-2}
\ee[|\hat p_{n,i}-p_{n,i}|^{\beta+1}]\leq C p_{n,i}^{(\beta+1)/2} n^{-(\beta+1)/2}.
\deq
\end{lem}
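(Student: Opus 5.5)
The plan is to prove both inequalities from two elementary moments of the centered sample proportion, combined by H\"older-type interpolation. Write $Y=\hat p_{n,i}-p_{n,i}$ and $p=p_{n,i}$. Since $n\hat p_{n,i}$ is Binomial$(n,p)$, we have
\[
\ee Y^2=\frac{p(1-p)}{n}\le \frac{p}{n},
\qquad
\ee|Y|\le \ee\hat p_{n,i}+p=2p .
\]
All the work consists of combining these two bounds appropriately. The only step needing any care is choosing the right interpolation for \eqref{lem4-1}.

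\textbf{Second bound \eqref{lem4-2}.} Since $\beta+1\le 2$, Lyapunov's (or Jensen's) inequality gives
\[
\ee|Y|^{\beta+1}\le \left(\ee Y^2\right)^{(\beta+1)/2}\le \left(\frac{p}{n}\right)^{(\beta+1)/2}
= p^{(\beta+1)/2}\,n^{-(\beta+1)/2}.
\]
This holds with $C=1$, and hence with any constant depending only on $\beta$.

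\textbf{First bound \eqref{lem4-1}.} I would not try to derive this from the second bound, since that only yields the right order when $np\le 1$. Instead I would interpolate between the first and second moments. For $\beta=1$ the claim is immediate, because $\ee Y^2\le p/n\le 4pn^{-1}$.

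For $\beta\in(0,1)$, split $|Y|^{\beta+1}=|Y|^{1-\beta}\,|Y|^{2\beta}$. Apply H\"older's inequality with exponents $1/(1-\beta)$ and $1/\beta$:
\[
\ee|Y|^{\beta+1}\le \left(\ee|Y|\right)^{1-\beta}\left(\ee Y^{2}\right)^{\beta}
\le (2p)^{1-\beta}\left(\frac{p}{n}\right)^{\beta}=2^{1-\beta}\,p\,n^{-\beta}\le 4p\,n^{-\beta}.
\]
This completes the lemma, and in fact gives the constant $2$ in place of $4$.
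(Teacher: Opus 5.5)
Your proof is correct and complete. $\ee Y^2\le p/n$ and $\ee|Y|\le 2p$ follow directly from $n\hat p_{n,i}\sim\mathrm{Bin}(n,p_{n,i})$. Lyapunov's inequality then gives \eqref{lem4-2} with $C=1$. The H\"older interpolation $\ee|Y|^{1+\beta}\le(\ee|Y|)^{1-\beta}(\ee Y^2)^{\beta}$ gives \eqref{lem4-1} with constant $2^{1-\beta}\le 2$, and the case $\beta=1$ is covered by the variance bound.

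The paper gives no argument for this lemma; it simply imports it from Grabchak and Zhang. Your two-moment derivation is therefore a self-contained substitute, and it even yields slightly better constants, 2 and 1 in place of 4 and an unspecified $C$.

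One small correction concerns your motivating aside, not the proof. Note that
\[
p^{(\beta+1)/2}n^{-(\beta+1)/2}=p\,n^{-\beta}\,(np)^{-(1-\beta)/2}.
\]
So the second bound already implies the first when $np\ge 1$. It is in the regime $np<1$ that the second bound is too weak to give the first. The direction of the inequality in your remark is therefore reversed.
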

\begin{lem}\label{lemJ}\cite{C-M}
Let $\{X_n, n\ge 1\}$ be a sequence of independent and identically distributed Bernoulli random variables with parameter $p\in (0,1)$. For $m=1,\dots,n$ let $S_m=\sum_{i=1}^m X_i$, $\hat p_m=S_m/m$, $\hat h_m=-\hat p_m \ln{\hat p_m}$. Then for $n \geq 3$, we have
\beq\label{lemJ-1}
\ee\left(\hat h_n-\hat h_{n-1}\right)\leq\frac{2p}{(n-1)\left((n-2)p+2\right)}.
\deq
\end{lem}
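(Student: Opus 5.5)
The plan is to compare $\hat h_n$ with $\hat h_{n-1}$ through concavity of $h(x)=-x\ln x$, and then evaluate the resulting first-order term exactly with binomial identities. Write $m=n-1$, $S=S_{m}\sim\mathrm{Bin}(m,p)$, so that $\hat p_{n-1}=S/m$. Since $S_n=S+X_n$,
$$
\hat p_n-\hat p_{n-1}=\frac{X_n-\hat p_{n-1}}{n},
$$
and $X_n$ is independent of $S$. Concavity gives, for $a>0$, the tangent-line bound $h(b)\le h(a)+h'(a)(b-a)$, with $h'(a)=-\ln a-1$. This is the direction that bounds $\hat h_n-\hat h_{n-1}$ from above. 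The tangent point $a=\hat p_{n-1}$ is not usable when $S=0$, so I split on this event.

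\emph{The event $\{S=0\}$.} Here $\hat h_{n-1}=0$ and $\hat h_n=h(X_n/n)$. Its contribution is therefore exactly
$$
(1-p)^{m}\,p\,h(1/n)=(1-p)^{m}\,p\,\frac{\ln n}{n}.
$$
This is exponentially small in $np$ when $p$ is fixed. I will keep it explicit and absorb it into the final bound at the end; this absorption may require some care for small $p$.

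\emph{The event $\{S\ge 1\}$.} The tangent inequality at $a=S/m$, followed by conditioning on $S$ and using $\ee X_n=p$, gives
$$
\ee\big[(\hat h_n-\hat h_{n-1})I_{\{S\ge1\}}\big]\le \frac1n\,\ee\Big[\big(\ln(S/m)+1\big)\big(S/m-p\big)I_{\{S\ge1\}}\Big].
$$
I then evaluate the right-hand side with the binomial identity $\ee[S f(S)]=mp\,\ee[f(S'+1)]$, where $S'\sim\mathrm{Bin}(m-1,p)$. Applying it to $f=\ln$ and to $f\equiv 1$ turns $\ee[(S-mp)\ln S\,;S\ge1]$ into
$$
mp\Big(\ee\ln(S'+1)-\ee[\ln S\,;S\ge 1]\Big).
$$
Writing $S=S'+\xi$ with $\xi\sim\mathrm{Bernoulli}(p)$ independent of $S'$, the bracket becomes
$$
(1-p)\,\ee\big[\ln(S'+1)-\ln S'\,;\,S'\ge1\big]+(1-p)^{m}\cdot(\text{boundary term}).
$$
The constant parts ($+1$, and the $\ln m$ coming from $\ln(S/m)=\ln S-\ln m$) multiply $\ee[(S/m-p)I_{\{S\ge1\}}]=p(1-p)^{m-1}\ge 0$. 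They also end up as boundary terms of order $(1-p)^{m}$, which I group with the $\{S=0\}$ contribution.

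\emph{Main estimate.} Using $\ln(1+1/k)\le 1/k\le 2/(k+1)$, the main term is bounded by
$$
\frac{p(1-p)}{n}\cdot 2\,\ee\Big[\frac{1}{S'+1}\Big].
$$
The closed form $\ee[1/(S'+1)]=\big(1-(1-p)^{m}\big)/(mp)$, or the cruder bound $\ee[1/(S'+1)]\le 1/\big((m-1)p+1\big)$, then gives a bound of the order $2p/\big((n-1)((n-2)p+2)\big)$.

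\emph{Main obstacle.} I expect the hardest step to be the final bookkeeping: the various $(1-p)^{m}$ boundary terms, including the $p\ln n/n$ piece, must be shown to fit inside the exact constant $2p/\big((n-1)((n-2)p+2)\big)$ for all $p\in(0,1)$ and $n\ge3$. If the crude inequality $\ln(1+1/k)\le 1/k$ leaves no room, I would first try the sharper bound $\ln(1+1/k)\le 2/(2k+1)$. I would then compute $\ee\big[2/(2S'+1)\big]$ via the integral representation $\ee[t^{S'}]=(1-p+pt)^{m-1}$, which should produce the denominator $(n-2)p+2$ directly, leaving slack to absorb the boundary terms.
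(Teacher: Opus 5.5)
The paper quotes this lemma from Chen et al.\ without proof. Your route has a genuine gap: the tangent-line step loses too much, and no later bookkeeping can recover the stated constant.

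Carry your own computation through exactly. Two small corrections: $\ee[(S/m-p)I_{\{S\ge1\}}]=p(1-p)^{m}$, not $p(1-p)^{m-1}$, and your ``boundary term'' is $0$ because $\ln(S'+1)=0$ on $\{S'=0\}$. With these, linearization at $\hat p_{n-1}$ yields precisely
\[
\ee(\hat h_n-\hat h_{n-1})\le \frac{p(1-p)}{n}\,\ee\,\phi(S'),\qquad S'\sim\mathrm{Bin}(n-2,p),
\]
where $\phi(0)=1+\ln\frac{n}{n-1}$ and $\phi(k)=\ln(1+1/k)$ for $k\ge1$. Every later step can only enlarge this.

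This quantity already exceeds $\frac{2p}{(n-1)((n-2)p+2)}$ when $np$ is of order one. For $n=20$, $p=0.03$ it is about $0.0422\,p$, against $0.0414\,p$ for the target. As $n\to\infty$ with $np\to1$, $n/p$ times your bound tends to $\ee\phi(N)\approx 0.719$ (with $N\sim$ Poisson$(1)$ and $\phi(0)=1$), while $n/p$ times the target tends to $2/3$.

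So the loss is in the linearization itself. When the count $S$ is $O(1)$, one extra observation moves $\hat p$ by a relative amount of order one. The curvature term of $-x\ln x$ that you discard is then as large as the linear term.

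Your fallbacks also point the wrong way. By the midpoint rule for the convex $1/x$, $\ln(1+1/k)\ge 2/(2k+1)$; for example, $\ln 2>2/3$. And Jensen gives $\ee[1/(S'+1)]\ge 1/((m-1)p+1)$, not $\le$.

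The missing idea is not to linearize at all. Your size-biasing identity is the right tool, but apply it to $\ee\hat h_m$ directly:
\[
\ee\hat h_m=p\ln m-\tfrac1m\ee[S_m\ln S_m]=p\ln m-p\,\ee\ln(1+\mathrm{Bin}(m-1,p)).
\]
Write $\mathrm{Bin}(n-1,p)$ as $B+\xi$, with $B\sim\mathrm{Bin}(n-2,p)$ and $\xi\sim$ Bernoulli$(p)$ independent. This gives the exact formula
\[
\ee(\hat h_n-\hat h_{n-1})=p\ln\frac{n}{n-1}-p^2\,\ee\ln\frac{B+2}{B+1}.
\]
Now use three facts: $\ln\frac{n}{n-1}\le\frac{1}{n-1}$; $\ln\frac{B+2}{B+1}\ge\frac{1}{B+2}$; and, by Jensen, $\ee\frac{1}{B+2}\ge\frac{1}{(n-2)p+2}$. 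Together they bound the difference by
\[
\frac{p}{n-1}-\frac{p^2}{(n-2)p+2}=\frac{p(2-p)}{(n-1)((n-2)p+2)},
\]
which is at most the claimed bound. The negative term $-p^2\,\ee\ln\frac{B+2}{B+1}$ produces the denominator $(n-2)p+2$, and it is exactly what the tangent-line bound throws away.
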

For fixed $\varepsilon>0$, rewriting the upper bound of (\ref{lemJ-1}) gives
\beq\label{lemJ-2}
\frac{2p}{(n-1)\left((n-2)p+2\right)}=\frac{2p^{1-\varepsilon}}{n-1}
\left(\frac{p^\varepsilon}{(n-2)p+2}\right)=:
\frac{2p^{1-\varepsilon}}{n-1}g(n,p,\varepsilon).
\deq
\begin{lem}\label{lemJ2}\cite{C-M}
For any $\varepsilon \in (0,1)$ and $n \geq 3$, there exists a $p_0 \in (0,1)$ such that $g(n,p,\varepsilon)$ defined in (\ref{lemJ-2}) is maximized at $p_0$ and
\beq\label{lemJ-3}
0\leq g(n,p_0,\varepsilon)= \mathcal{O}(n^{-\varepsilon}).
\deq
\end{lem}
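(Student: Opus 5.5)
The bound in Lemma \ref{lemJ2} can be obtained by elementary calculus, maximising $g(n,p,\varepsilon)$ in $p$ for fixed $n$ and $\varepsilon$. Write $a=n-2\ge 1$, so that $g(n,p,\varepsilon)=p^\varepsilon/(ap+2)$, which is nonnegative on $[0,1]$ and vanishes at $p=0$. I will extend it to $p\in(0,\infty)$ and take the logarithmic derivative:
$$
\frac{\partial}{\partial p}\ln g(n,p,\varepsilon)=\frac{\varepsilon}{p}-\frac{a}{ap+2}.
$$
This is positive exactly when $\varepsilon(ap+2)>ap$, that is, when $p<p^*:=\frac{2\varepsilon}{a(1-\varepsilon)}$. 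Hence $g(n,\cdot,\varepsilon)$ is unimodal, increasing on $(0,p^*)$ and decreasing on $(p^*,\infty)$, with a unique global maximiser $p^*$.

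For all $n$ large enough that $a(1-\varepsilon)>2\varepsilon$, the maximiser satisfies $p^*\in(0,1)$, and I set $p_0=p^*$. Substituting, $ap_0+2=\frac{2\varepsilon}{1-\varepsilon}+2=\frac{2}{1-\varepsilon}$, so
$$
g(n,p_0,\varepsilon)=\frac{1-\varepsilon}{2}\left(\frac{2\varepsilon}{(1-\varepsilon)(n-2)}\right)^{\varepsilon}.
$$
Since $n-2\ge n/3$ for $n\ge3$, this is at most $C_\varepsilon n^{-\varepsilon}$ with $C_\varepsilon$ depending only on $\varepsilon$. Nonnegativity is immediate.

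Finitely many small $n$ remain, those with $p^*\ge 1$. For these, unimodality shows that $g$ is nondecreasing on $(0,1]$, so its supremum over $[0,1]$ is $g(n,1,\varepsilon)=1/n\le n^{-\varepsilon}$. These finitely many $n$ can alternatively be absorbed into the constant in $\mathcal{O}(n^{-\varepsilon})$. Note that in this regime the maximiser sits at the endpoint $p=1$ rather than in the open interval $(0,1)$, so the claim $p_0\in(0,1)$ holds literally only once $n$ is large.

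In every case $\sup_{p\in(0,1]}g(n,p,\varepsilon)=\mathcal{O}(n^{-\varepsilon})$, and this supremum bound is what later feeds into (\ref{lemJ-2}). There is no serious obstacle here: the only care needed is in the endpoint case for small $n$, and in keeping the implied constant uniform in $p$ and dependent only on $\varepsilon$.
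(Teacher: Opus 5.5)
Your argument is correct and complete. The paper gives no proof of this lemma; it is quoted from Chen et al. So your direct maximisation serves as a self-contained proof rather than a variant of an argument in the paper.

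Your computations are right. You have $ap^*+2=2/(1-\varepsilon)$, and hence $g(n,p^*,\varepsilon)=\frac{1-\varepsilon}{2}\bigl(\frac{2\varepsilon}{(1-\varepsilon)(n-2)}\bigr)^{\varepsilon}\le C_\varepsilon n^{-\varepsilon}$.

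Your caveat about small $n$ is also accurate. When $n-2\le 2\varepsilon/(1-\varepsilon)$, the supremum over $(0,1]$ is attained at the endpoint $p=1$. An example is $\varepsilon=0.9$, $n=3$, where $p^*=18$. In that regime the lemma's literal claim $p_0\in(0,1)$ is false. This does no harm, because the proof of Theorem~\ref{thmJ} only uses $\sup_{p\in(0,1]}g(n,p,\varepsilon)\le C_\varepsilon n^{-\varepsilon}$.

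You can also drop the case split. Since $p^*$ maximises $g(n,\cdot,\varepsilon)$ over all of $(0,\infty)$, the displayed value at $p^*$ bounds $g$ on $(0,1]$ for every $n\ge 3$, whether or not $p^*<1$.
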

\begin{lem}\label{lemJ3}\cite{C-M}
For any distribution, let $B_n=\ee \hat H_n-H$ be the bias of the plug-in based on a sample of size $n$, we have
$$
\hat B_{JK}:=\frac{n-1}{n}\sum_{j=1}^n \left(\hat H_n-\hat H_n^{(j)}\right)\ge 0
$$
and
\beq\label{lemJ-4}
\ee(\hat B_{JK})=(n-1)(B_n-B_{n-1}).
\deq
\end{lem}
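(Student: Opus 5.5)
The plan is to treat the two assertions of Lemma \ref{lemJ3} separately. The identity (\ref{lemJ-4}) follows from exchangeability and linearity of expectation. The nonnegativity of $\hat B_{JK}$ is a pathwise statement, and I would derive it from concavity of $h(x)=-x\ln x$ together with the observation that the leave-one-out empirical distributions average exactly to the full empirical distribution.

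For (\ref{lemJ-4}), I write
$$
\ee(\hat B_{JK})=\frac{n-1}{n}\sum_{j=1}^n\left(\ee\hat H_n-\ee\hat H_n^{(j)}\right).
$$
For each fixed $j$, the subsample $\{X_k,\,k\neq j\}$ consists of $n-1$ i.i.d. copies of $X_1$. Hence $\hat H_n^{(j)}$ has the same law as the plug-in estimator built from a sample of size $n-1$, and so $\ee\hat H_n^{(j)}=H+B_{n-1}$. Since also $\ee\hat H_n=H+B_n$, each summand equals $B_n-B_{n-1}$. Summing the $n$ summands and multiplying by $(n-1)/n$ gives $(n-1)(B_n-B_{n-1})$.

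For $\hat B_{JK}\ge 0$, it suffices to show $\hat H_n\ge \frac1n\sum_{j=1}^n\hat H_n^{(j)}$ pointwise on the sample space. Fix a letter $i$ with count $Y_i=\sum_k I_{\{X_k=i\}}$. I compute
$$
\frac1n\sum_{j=1}^n\hat p_i^{(j)}=\frac{1}{n(n-1)}\left(nY_i-Y_i\right)=\frac{Y_i}{n}=\hat p_i .
$$
So the vector $\hat{\mathbf p}=(\hat p_i)_i$ is the uniform average of the vectors $\hat{\mathbf p}^{(j)}$. Because $h$ is concave on $[0,1]$, Jensen's inequality gives, for each $i$,
$$
h(\hat p_i)\ge \frac1n\sum_{j=1}^n h(\hat p_i^{(j)}).
$$
Summing over $i$ yields the claim.

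The only point needing care is the interchange of the sum over $i$ with the finite sum over $j$. Only the at most $n$ letters actually observed have $\hat p_i>0$. For every other letter, $\hat p_i=0$ and also $\hat p_i^{(j)}=0$ for all $j$, so all those terms vanish. The series over $i$ is therefore effectively a finite sum, and the interchange is trivial. I do not expect a genuine obstacle here; the main thing to verify is the averaging identity for the leave-one-out proportions.
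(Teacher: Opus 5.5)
Your proposal is correct. The paper itself gives no proof of Lemma~\ref{lemJ3}: it imports the statement from \cite{C-M} and uses it in the proof of Theorem~\ref{thmJ}. There, nonnegativity is used to replace $\ee|\hat B_{JK}|$ by $\ee\hat B_{JK}$, and the identity (\ref{lemJ-4}) is used to express that expectation through the biases. So there is no in-paper argument to compare against, and your proposal supplies a self-contained justification.

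The expectation identity is exactly the exchangeability computation you give. The pathwise inequality is clean: the leave-one-out proportions average to the full ones, $\frac1n\sum_{j}\hat p_i^{(j)}=\hat p_i$, and $-x\ln x$ is concave. This argument in fact shows slightly more than is needed, namely that the jackknife bias estimate is nonnegative pathwise for the plug-in estimator of any index $\sum_i g(p_i)$ with $g$ concave and $g(0)=0$.

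One small caveat concerns the phrase ``for any distribution''. If $H=\infty$, then $B_n$ is not a finite number, and the right-hand side of (\ref{lemJ-4}) must be read as $(n-1)(\ee\hat H_n-\ee\hat H_{n-1})$. Your argument proves this form directly, since $0\le\hat H_m\le\ln m$ makes every expectation involved finite. Under the moment assumption of Theorem~\ref{thmJ}, $H<\infty$, so the issue does not arise in the paper's application.
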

\begin{proof} [{\bf Proof of Theorem \ref{thmtheta}}]
From Lemma \ref{lem3}, we have
\beq\label{thm1-p1}
\aligned
\hat \theta_n-\theta_n=\sum_{i=1}^\infty g'(p_{n,i})(\hat p_{n,i}-p_{n,i})+\sum_{i=1}^\infty R_{p_{n,i}}(\hat p_{n,i}).
\endaligned
\deq
For every $n\ge 1$ and $1\le k\le n$, let us define
\beq\label{thm1-p2}
T_{k,n}:=\sum_{i=1}^{\infty}\left(I_{\{X_{k, n}=a_i\}}-p_{n,i}\right)g'(p_{n,i}),
\deq
then we have
\beq\label{thm1-p3}
\aligned
\sum_{i=1}^\infty g'(p_{n,i})(\hat p_{n,i}-p_{n,i})=\frac{1}{n}\sum_{k=1}^nT_{k,n}.
\endaligned
\deq
It is easy to check that $\{T_{k,n}\}$ is an array of independent and identically distributed random variables with $\ee T_{1,n}=0$ and
\begin{align*}
Var(T_{1,n})=&\ee\left(\sum_{i=1}^{\infty}\left(1_{\{X_{1, n}=a_i\}}-p_{n,i}\right)g'(p_{n,i})\right)^2\\
=&\sum_{i=1}^{\infty}p_{n,i}(g'(p_{n,i})^2-\left(\sum_{i=1}^{\infty}p_{n,i}g'(p_{n,i})\right)^2
=\sigma^2_n.
\end{align*}
Since $g'$ is $\beta$-H\"{o}lder continuous, there exists a positive constant $M$, such that
\beq\label{g}
|T_{k,n}|\le M\sum_{i=1}^{\infty}(I_{\{X_{k, n}=a_i\}}+p_{n,i})\le 2M.
\deq
Thus for any $0< \alpha< 1$, using H\"older's inequality, we have
\begin{align}\label{thm1-p4}
\ee|T_{1,n}|^{2+\alpha}
 \le&
 \left(\ee|T_{1,n}|^{\frac{2}{1-\alpha}}\right)^{\frac{1-\alpha}{2}}
 \left(\ee|T_{1,n}|^2\right)^{\frac{1+\alpha}{2}}
 \le C\sigma_n^{1+\alpha}.
\end{align}
By Lemma \ref{lem2}, for any $a>0$, we have
\begin{align}\label{thm1-p5}
&\sup\limits_x\left|\pp\left(\frac{\sqrt{n}}{\sigma_n}(\hat \theta_n-\theta_n)<x\right)-\Phi(x)\right|\nonumber\\
\le& \sup\limits_x\left|\pp\left(\frac{1}{\sqrt{n}\sigma_n}\sum_{k=1}^nT_{k,n}<x\right)-\Phi(x)\right|
+\frac{a}{\sqrt{2\pi}}+\pp\left(\frac{\sqrt{n}}{\sigma_n}\left|\sum_{i=1}^\infty R_{p_{n,i}}(\hat p_{n,i})\right|>a\right).
\end{align}
Firstly, by Lemma \ref{lem1} and \ref{thm1-p4}, for any $0< \alpha=2\gamma< 1$ we have
\begin{align}\label{thm1-p6}
&\sup\limits_x\left|\pp\left(\frac{1}{\sqrt{n}\sigma_n}\sum_{k=1}^nT_{k,n}<x\right)-\Phi(x)\right|\nonumber\\
\le &C\sum_{k=1}^n\left\{\ee \left(\frac{T_{k,n}}{\sqrt{n}\sigma_n}\right)^2I_{\left\{|T_{k,n}|>\sqrt{n}\sigma_n\right\}}+
 \ee\left(\frac{|T_{k,n}|}{\sqrt{n}\sigma_n}\right)^3I_{\left\{|T_{k,n}|\le\sqrt{n}\sigma_n\right\}}\right\}\nonumber\\
 \le & C\sum_{k=1}^n\left\{\ee \left|\frac{T_{k,n}}{\sqrt{n}\sigma_n}\right|^{2+\alpha}I_{\left\{|T_{k,n}|>\sqrt{n}\sigma_n\right\}}+
 \ee\left|\frac{T_{k,n}}{\sqrt{n}\sigma_n}\right|^{2+\alpha}I_{\left\{|T_{k,n}|\le\sqrt{n}\sigma_n\right\}}\right\}\nonumber\\
 \le & \frac{C}{n^{\alpha/2}\sigma_n^{2+\alpha}}\ee|T_{1,n}|^{2+\alpha}
 \le \frac{C}{n^{\alpha/2}\sigma_n}=\frac{C}{n^{\gamma}\sigma_n}.
\end{align}
By using Markov inequality and Lemma \ref{lem3}, we have
\begin{align}\label{thm1-p7}
&\pp\left(\frac{\sqrt{n}}{\sigma_n}\left|\sum_{i=1}^\infty R_{p_{n,i}}(\hat p_{n,i})\right|>a\right)
\leq \frac{C\sqrt{n}}{a\sigma_n}\sum_{i=1}^\infty\ee\left(|\hat p_{n,i}-p_{n,i}|^{\beta+1}\right).
\end{align}
Next we can get the further upper bound of the above inequation by Lemma \ref{lem4}.
When $\beta \in (0.5,1]$, using (\ref{lem4-1}), we can obtain the upper bound as
\beq\label{thm1-p8}
\frac{C\sqrt{n}}{a\sigma_n}\sum_{i=1}^\infty 4n^{-\beta}p_{n,i}=\frac{C}{a \sigma_n n^{(\beta-1/2)}},
\deq
and when $\beta \in (0,0.5]$, using (\ref{lem4-2}) and the condition (\ref{thm1-c}), we obtain the upper bound as
\beq\label{thm1-p9}
\frac{C\sqrt{n}}{a\sigma_n}\sum_{i=1}^\infty p_{n,i}^{(\beta+1)/2} n^{-(\beta+1)/2}=\frac{C}{a \sigma_n n^{(\beta/2)}}.
\deq
By taking $a=\frac{1}{n^{\gamma/2}\sigma_n^{1/2}}$,  (\ref{thmtheta-3}) holds by using (\ref{thm1-p5}), (\ref{thm1-p6}), (\ref{thm1-p7}), (\ref{thm1-p8}) and (\ref{thm1-p9}).
\end{proof}
\begin{proof}[{\bf Proof of Corollary \ref{cortheta}}]
Corollary $\ref{cortheta}$ corresponds to the case where $\sigma_n=\sigma$ is fixed in Theorem $\ref{thmtheta}$. Since $\sigma^2 > 0$ for any nonuniform distribution, the proof follows directly from that of Theorem $\ref{thmtheta}$ with this parameter substitution.
\end{proof}
\begin{proof}[{\bf Proof of Theorem \ref{thmH}}]
Let $\{X_{k,n}, 1\le k\le n, n\ge 1\}$ be an array of independent and identically distributed random variables with common nonuniform distribution $\{p_{n,i},  1\le i\le K(n), n\ge 1\}$, i.e., for any $n\ge 1$,
$$
\pp(X_{1,n}=i)=p_{n,i},  \ \ \  \ 1\le i\le K(n)
$$
where
$$
p_{n,i}=
\begin{cases}
p_i,& \ \ \ 1\leq i<K(n)\\
\sum_{i\geq K(n)}p_i,& \ \ \ i=K(n).
\end{cases}
$$
Define the entropy as
$$
H_n=-\sum_{i=1}^{K(n)} p_{n,i}\ln p_{n,i},
$$
and the corresponding sample proportion is
$$
\hat p_{n,i}=\frac{1}{n}\sum_{k=1}^nI_{\{X_{k,n}=a_i\}}.
$$
From the definition of the plug-in estimator $\hat H_n$ for the entropy $H$, we have
\beq\label{thmH-p1}
\aligned
\hat H- H=&-\sum_{i=1}^{\infty}(\hat p_i-p_i)\ln p_i -\sum_{i=1}^{\infty}\hat p_i\ln \frac{\hat p_i}{p_i}.
\endaligned
\deq
For every $n\ge 1$ and $1\le k\le n$, let us define
$$
T_{k}:=-\sum_{i=1}^{\infty}\left(I_{\{X_{k}=a_i\}}-p_i\right)\ln p_i,
$$
then we have
\beq\label{thmH-p2}
\aligned
-\sum_{i=1}^{\infty}(\hat p_i-p_i)\ln p_i =\frac{1}{n}\sum_{k=1}^nT_{k}.
\endaligned
\deq
It's easy to see that $\ee|\ln p(X_1)|^{2+\delta}=\sum_{i=1}^{\infty} p_i\left|\ln p_i\right|^{2+\delta} < \infty$ can imply
$H=-\sum_{i=1}^\infty p_i\ln p_i< \infty$ and $\sigma^2 < \infty$.
Using the elementary inequality $(a+b)^{2+\delta}\le C(a^{2+\delta}+b^{2+\delta})$, we have
\beq\label{thmH-p3}
\aligned
\ee|T_{1}|^{2+\delta}=&\ee\left|\sum_{i=1}^{\infty}\left(I_{\{X_1=a_i\}}-p_i\right)\ln p_i\right|^{2+\delta}\\
\le &C\ee\left|\sum_{i=1}^{\infty}I_{\{X_{1}=a_i\}}\ln p_i\right|^{2+\delta}+C\left|-\sum_{i=1}^{\infty}p(i)\ln p_i\right|^{2+\delta}\\
\leq &C\sum_{i=1}^{\infty} p_i\left|\ln p_i\right|^{2+\delta}+ C\left|H\right|^{2+\delta}< \infty.
\endaligned
\deq
From (\ref{thmH-p1}), (\ref{thmH-p2}) and Lemma \ref{lem2}, for any $a>0$, we have
\begin{align}\label{thmH-p4}
&\sup\limits_x\left|\pp\left(\frac{\sqrt{n}}{\sigma}(\hat{H}-H)<x\right)-\Phi(x)\right|\nonumber\\
\le &\sup\limits_x\left|\pp\left(\frac{1}{\sqrt{n}\sigma}\sum_{k=1}^nT_{k}<x\right)-\Phi(x)\right|
+\frac{a}{\sqrt{2\pi}} +\pp\left(\frac{\sqrt{n}}{\sigma}\left|\sum_{i=1}^{\infty}\hat p_{i}\ln {\frac{\hat p_{i}}{p_i}}\right|>a\right).
\end{align}
Firstly, by (\ref{thmH-p3}) and Lemma \ref{lem1}, for the fixed $0\le \delta< \frac{1}{2}$, we have
\begin{align}\label{thmH-p5}
&\sup\limits_x\left|\pp\left(\frac{1}{\sqrt{n}\sigma}\sum_{k=1}^nT_k<x\right)-\Phi(x)\right|\nonumber\\
\le &C\sum_{k=1}^n\left\{\ee \left(\frac{T_k}{\sqrt{n}\sigma}\right)^2I_{\left\{|T_k|>\sqrt{n}\sigma\right\}}+
 \ee\left(\frac{T_k}{\sqrt{n}\sigma}\right)^3I_{\left\{|T_k|\le\sqrt{n}\sigma\right\}}\right\}\nonumber\\
 \le & C\sum_{k=1}^n\left\{\ee \left|\frac{T_k}{\sqrt{n}\sigma}\right|^{2+\delta}I_{\left\{|T_k|>\sqrt{n}\sigma\right\}}+
 \ee\left|\frac{T_k}{\sqrt{n}\sigma}\right|^{2+\delta}I_{\left\{|T_k|\le\sqrt{n}\sigma\right\}}\right\}\nonumber\\
 \le & \frac{C}{n^{\delta/2}\sigma^{2+\delta}}\ee|T_1|^{2+\delta}
  \le  \frac{C}{n^{\delta/2}}.
\end{align}
We observe that
$-\hat p_{i}\ln \hat p_{i}\leq\hat p_{i}\ln n$ and $-\hat p_{n,i}\ln \hat p_{n,i}\leq\hat p_{n,i}\ln n$, then the following two inequalities
$$
0\leq\sqrt{n}\sum_{i\geq K(n)}\left(-\hat p_{i}\ln {\hat p_{i}}\right)\leq\sqrt{n}\sum_{i\geq K(n)}\hat p_{i}\ln n
$$
and
\begin{align}\label{thmH-p6}
0\leq-\sqrt{n}\hat p_{n,K(n)}\ln \hat p_{n,K(n)}\leq\sqrt{n}\hat p_{n,K(n)}\ln n
\end{align}
clearly hold.
From the following inequality
$$
\frac{h}{1+h}<\ln (1+h)<h \ \ \ \text{for}\ \ h>-1,
$$
we have
\beq
\sum_{i=1}^{K(n)}\hat p_{n,i}\ln \frac{\hat p_{n,i}}{p_{n,i}}\le \sum_{i=1}^{K(n)}\hat p_{n,i}\left(\frac{\hat p_{n,i}}{p_{n,i}}-1\right)=\sum_{i=1}^{K(n)}\frac{\left(\hat p_{n,i}-p_{n,i}\right)^2}{p_{n,i}}
\deq
and
\beq
\sum_{i=1}^{K(n)}\hat p_{n,i}\ln \frac{\hat p_{n,i}}{p_{n,i}}\ge \sum_{i=1}^{K(n)}\frac{\hat p_{n,i}\left(\frac{\hat p_{n,i}}{p_{n,i}}-1\right)}{1+\left(\frac{\hat p_{n,i}}{p_{n,i}}-1\right)}
=\sum_{i=1}^{K(n)} p_{n,i}\left(\frac{\hat p_{n,i}}{p_{n,i}}-1\right)=0.
\deq
With the condition $K(n)\le n^{\frac{1}{2}-\delta}$, we obtain
\begin{align*}
&\pp\left(\frac{\sqrt{n}}{\sigma}\left|\sum_{i=1}^{K(n)}\hat p_{n,i}\ln {\frac{\hat p_{n,i}}{p_{n,i}}}\right|>a\right)\\
\le&\pp\left(\frac{\sqrt{n}}{\sigma}\left|\sum_{i=1}^{K(n)}\frac{\left(\hat p_{n,i}-p_{n,i}\right)^2}{p_{n,i}}\right|>a\right)\\
\le&\frac{\sqrt{n}}{a \sigma}\sum_{i=1}^{K(n)}\ee \left(\frac{\left(\hat p_{n,i}-p_{n,i}\right)^2}{p_{n,i}}\right)\\
=&\frac{1}{n^{3/2}a\sigma}\sum_{i=1}^{K(n)} \frac{1}{p_{n,i}}\ee\left[\left(\sum_{k=1}^n(I_{\{X_{k,n}=a_i\}} -p_{n,i})\right)^2\right]\\
=& \frac{1}{a\sqrt{n}\sigma}\sum_{i=1}^{K(n)}(1-p_{n,i}) \le\frac{K(n)}{a\sqrt{n}\sigma}\le\frac{C}{an^{\delta}}.
\end{align*}
From the definition that $p_{n,K(n)}=\sum_{i\geq K(n)}p_i$, we have
$$
\sum_{i\geq K(n)} p_i\ln n=p_{n,K(n)}\ln n
$$
and
$$
\sum_{i\geq K(n)} p_i\ln p_i\le \sum_{i\geq K(n)} p_i\ln p_{n,K(n)}= p_{n,K(n)}\ln p_{n,K(n)}.
$$
With (\ref{thmH-p6}) and Markov's inequality, we have
\begin{align*}
&\pp\left(\frac{\sqrt{n}}{\sigma}\left|\hat p_{n,K(n)}\ln \frac{\hat p_{n,K(n)}}{p_{n,K(n)}}\right|>a\right)\\
\le&\pp\left(-\hat p_{n,K(n)}\ln \hat p_{n,K(n)}-\hat p_{n,K(n)}\ln{p_{n,K(n)}}>\frac{a\sigma}{\sqrt{n}}\right)\\
\le&\pp\left(\hat p_{n,K(n)}\ln n-\hat p_{n,K(n)}\ln{p_{n,K(n)}}>\frac{a\sigma}{\sqrt{n}}\right)\\
\le&\frac{\sqrt{n}}{a\sigma}\ee\left|\hat p_{n,K(n)}\ln n-\hat p_{n,K(n)}\ln{p_{n,K(n)}}\right|\\
=&\frac{\sqrt{n}}{a\sigma}\left(p_{n,K(n)}\ln n-p_{n,K(n)}\ln p_{n,K(n)}\right)\\
\le&\frac{\sqrt{n}}{a\sigma}\left(p_{n,K(n)}\ln n-\sum_{i\geq K(n)} p_i\ln p_i\right)
\end{align*}
and
\begin{align*}
&\pp\left(\frac{\sqrt{n}}{\sigma}\left|\sum_{i\geq K(n)}\hat p_{i}\ln {\frac{\hat p_{i}}{p_i}}\right|>a\right)\\
\le&\pp\left(-\sum_{i\geq K(n)}\hat p_{i}\ln \hat p_{i}-\sum_{i\geq K(n)}\hat p_{i}\ln p_{i}>\frac{a\sigma}{\sqrt{n}}\right)\\
\le&\pp\left(\sum_{i\geq K(n)}\hat p_{i}\ln n-\sum_{i\geq K(n)}\hat p_{i}\ln p_{i}>\frac{a\sigma}{\sqrt{n}}\right)\\
\le&\frac{\sqrt{n}}{a\sigma}\ee\left|\sum_{i\geq K(n)}\hat p_{i}\ln n-\sum_{i\geq K(n)}\hat p_{i}\ln p_{i}\right|\\
\le&\frac{\sqrt{n}}{a\sigma}\left(p_{n,K(n)}\ln n-\sum_{i\geq K(n)} p_i\ln p_i\right).
\end{align*}
Then using the assumption (\ref{thmH-0}), we get
\begin{align}\label{thmH-p7}
&\pp\left(\frac{\sqrt{n}}{\sigma}\left|\sum_{i=1}^{\infty}\hat p_{i}\ln {\frac{\hat p_{i}}{p_i}}\right|>a\right)\nonumber\\
=&\pp\left(\frac{\sqrt{n}}{\sigma}\left|\sum_{i=1}^{K(n)}\hat p_{n,i}\ln {\frac{\hat p_{n,i}}{p_{n,i}}}-\hat p_{n,K(n)}\ln \frac{\hat p_{n,K(n)}}{p_{n,K(n)}}+\sum_{i\geq K(n)}\hat p_{i}\ln {\frac{\hat p_{i}}{p_i}}\right|>a\right)\nonumber\\
\le &C\left(\frac{1}{a n^{\delta}}+\frac{\sqrt{n}}{a}p_{n,K(n)}\ln n
-\frac{\sqrt{n}}{a}\sum_{i\geq K(n)} p_i\ln p_i\right)
\le C\frac{1}{a n^{\delta}}.
\end{align}
By taking $a=n^{-\frac{\delta}{2}}$,  (\ref{thmH-1}) holds by using (\ref{thmH-p4}), (\ref{thmH-p5}) and (\ref{thmH-p7}).
\end{proof}
\begin{proof} [{\bf Proof of Theorem \ref{thmM}}]
By the defition of $H_{MM}$, we have
\beq\label{thmM-p1}
\hat H_{MM}-H=\hat H_n-H+\frac{\hat m-1}{2n}.
\deq
By Lemma \ref{lem2}, for any $a>0$ we have
\begin{align}\label{thmM-p2}
&\sup_x\left|\pp\left(\frac{\sqrt{n}}{\sigma}(\hat H_{MM}-H)\le x\right)-\Phi(x)\right|\nonumber\\
\le& \sup\limits_x\left|\pp\left(\frac{\sqrt{n}}{\sigma}(\hat H-H)\le x\right)-\Phi(x)\right|
+\frac{a}{\sqrt{2\pi}} +\pp\left(\frac{\sqrt{n}}{\sigma}\left|\frac{\hat m-1}{2n}\right|>a\right).
\end{align}
Firstly, by Theorem \ref{thmH}
\beq\label{thmM-p3}
\sup\limits_x\left|\pp\left(\frac{\sqrt{n}(\hat{H}-H)}{\sigma}<x\right)-\Phi(x)\right|
 \leq Cn^{-\frac{\delta}{2}}.
\deq
Then from Bernoulli's inequality
$(1-p_i)^n\ge 1-np_i$, we have
$$
1-(1-p_i)^n\le np_i.
$$
By using Markov inequality and the assumption (\ref{thmM-1}), we have
\begin{align}\label{thmM-p4}
\pp\left(\frac{\sqrt{n}}{\sigma}\left|\frac{\hat m-1}{2n}\right|>a\right)
&\leq \frac{C}{a\sqrt{n}}\left(\ee|\hat m|+1\right)
=\frac{C}{a\sqrt{n}}\left(1+\sum_{i=1}^{\infty}\left(1-(1-p_i)^n\right)\right)\nonumber\\
&\leq \frac{C}{a\sqrt{n}}K(n)+\frac{C\sqrt{n}}{a}\sum_{i=K(n)}^{\infty}p_i
\leq \frac{C}{a n^\delta}.
\end{align}
By taking $a=n^{-\frac{\delta}{2}}$,  (\ref{thmM-3}) holds by using (\ref{thmM-p2}), (\ref{thmM-p3}) and (\ref{thmM-p4}).
\end{proof}
\begin{proof} [{\bf Proof of Theorem \ref{thmJ}}]
By the defition of $H_{JK}$, we have
\beq\label{thmJ-p1}
\hat H_{JK}-H=\hat H_n-H+\frac{n-1}{n}\sum_{j=1}^n \left(\hat H-\hat H^{(j)}\right).
\deq
Let us define
$$
\hat B_{JK}:=\frac{n-1}{n}\sum_{j=1}^n \left(\hat H-\hat H^{(j)}\right).
$$
By Lemma \ref{lem2}, for any $a>0$ we have
\begin{align}\label{thmJ-p2}
&\sup_x\left|\pp\left(\frac{\sqrt{n}}{\sigma}(\hat H_{JK}-H)\le x\right)-\Phi(x)\right|\nonumber\\
\le& \sup\limits_x\left|\pp\left(\frac{\sqrt{n}}{\sigma}(\hat H-H)\le x\right)-\Phi(x)\right|
+\frac{a}{\sqrt{2\pi}}+\pp\left(\frac{\sqrt{n}}{\sigma}\left|\hat B_{JK}\right|>a\right).
\end{align}
Firstly, by Theorem \ref{thmH} we have
\beq\label{thmJ-p3}
\sup\limits_x\left|\pp\left(\frac{\sqrt{n}(\hat{H}-H)}{\sigma}<x\right)-\Phi(x)\right|
 \leq Cn^{-\frac{\delta}{2}}.
\deq
For any distribution, let $B_n=\ee\hat H_n-H$ be the bias of the plug-in based on a sample of size $n$. And for every $i$ and every $m\leq n$, let
$$
S_{m,i}=\sum_{k=1}^mI_{\{X_k=i\}}\ \ and\ \ \hat H_m=-\sum_{i=1}^\infty \frac{S_{m,i}}{m}\ln{\frac{S_{m,i}}{m}}
$$
be the observed letter counts and the plug-in estimator of entropy based on the first $m$ observations. By using Lemma \ref{lemJ}, Lemma \ref{lemJ2} and the $\epsilon \in (1/2, 1)$ satisfies assumption (\ref{thmJ-2}), we have
\begin{align*}
B_n-B_{n-1}&=\ee(\hat H_n-\hat H_{n-1})\nonumber\\
&=\sum_{i=1}^\infty \ee\left(\frac{S_{n-1,i}}{n-1}\ln{\frac{S_{n-1,i}}{n-1}}-\frac{S_{n,i}}{n}\ln{\frac{S_{n,i}}{n}}\right)\nonumber\\
&\leq \sum_{i=1}^\infty \frac{2p_i}{(n-1)\left((n-2)p_i+2\right)}\nonumber\\
&=2\sum_{i=1}^\infty p_i^{1-\epsilon}\frac{1}{(n-1)}\frac{p_i^\epsilon}{\left((n-2)p_i+2\right)}\nonumber\\
&\leq \frac{C}{n^{1+\epsilon}}.
\end{align*}
By using Markov inequality and Lemma \ref{lemJ3}, we have
\begin{align}\label{thmJ-p4}
\pp\left(\frac{\sqrt{n}}{\sigma}\left|\hat B_{JK}\right|>a\right)
&\leq \frac{\sqrt{n}}{a\sigma}\ee\left|\hat B_{JK}\right|
=\frac{\sqrt{n}(n-1)}{a\sigma}\left(B_n-B_{n-1}\right)\nonumber\\
&\leq \frac{C}{a}n^{1/2-\epsilon}.
\end{align}
By taking $a=n^{\frac{1}{4}-\frac{\epsilon}{2}}$, (\ref{thmJ-3}) holds by using (\ref{thmJ-p2}), (\ref{thmJ-p3}) and (\ref{thmJ-p4}).
\end{proof}

\end{document}